\newcommand{\al}{\alpha}
\newcommand{\ga}{\gamma}
\newcommand{\de}{\delta}
\newcommand{\eps}{\varepsilon}
\newcommand{\bx}{\bar x}
\newcommand{\by}{\bar y}
\newcommand{\iv}{^{-1} }
\newcommand {\R} {\mathbb R}
\newcommand {\N} {\mathbb N}
\newcommand {\B} {\mathbb B}
\newcommand {\gph} {{\rm gph}\,}
\newcommand {\dom} {{\rm dom}\,}
\newcommand {\sd} {\partial}
\def\es{\emptyset}
\def\Fr{Fr\'echet}
\newcommand{\ang}[1]{\left\langle #1 \right\rangle}
\newcounter{mycount}
\begin{document}
\title{Lyusternik-Graves Theorem for H\"older Metric Regularity
}

\author{Nguyen Duy Cuong
}
\institute{
Nguyen Duy Cuong 	(\Letter)
\\
{Department of Mathematics, College of Natural Sciences, Can Tho University, Vietnam}\\
Email: ndcuong@ctu.edu.vn
}


\date{Received: date / Accepted: date}
\maketitle

\begin{abstract}
The paper extends the well-known Lyusternik-Graves theorem for set-valued mappings to the H\"older framework, offers an affirmative answer to an open problem proposed by Dontchev  and improves recent results of He and Ng.
Primal and dual necessary and sufficient conditions for H\"older metric regularity are established.
The  results are applied to convergence analysis of a Newton-type method.
Some open problems for future research are also discussed.

\end{abstract}

\keywords{Metric regularity
\and Slope
\and Coderivative 
\and Generalized equation
\and
Newton's method
}

\subclass{Primary 49J52 \and 49J53 \and Secondary 49K40 \and 90C30 \and 90C46}


\section{Introduction}\label{sect1}
It is well known that many important problems in variational analysis and optimization, cf. \cite{DonRoc14,Mor06.2,Iof17,RocWet98}, can be modelled by 
the  generalized equation
\begin{gather}\label{GE}
 F(x)\ni y
\end{gather}
where $F:X\rightrightarrows Y$ is a set-valued mapping between metric spaces.
When the mapping is  single-valued,  inclusion \eqref{GE} reduces to a conventional equation, but more broadly it can express a mixture of inequalities and equalities.
Relation \eqref{GE} can represent  a variational inequality or a system of optimality conditions.
An important issue in investigating a generalized equation is to study the behavior of the solution set $F^{-1}(y)$  with respect to perturbations in $y$, and this can often be expressed in terms of the property called  `metric regularity'.
The property has its roots in the classical results by Banach and plays a central role in variational analysis  both theoretically and numerically  \cite{DonRoc14,Mor06.1,RocWet98,Iof17}.

\begin{definition}\label{D1.3}
Let $X$ and $Y$ be metric spaces, $F:X\rightrightarrows Y$, and $(\bx,\by)\in \gph F$.
The mapping $F$ is metrically regular at $(\bx,\by)$ if there exist $\tau>0$ and $\de>0$ such that
\begin{align}\label{D1.1-4}
\tau d(x,F\iv(y))\le d(y,F(x))
\end{align}
for all $x\in B_{\de}(\bx)$ and $y\in B_{\de}(\by)$.
The supremum of $\tau$ such that \eqref{D1.1-4} holds for some $\de>0$ is called the  modulus of regularity of $F$ at $(\bx,\by)$ and denoted by 
\rm{rg}$F(\bx,\by)$.
\end{definition}

It should be noted that \rm{rg}$F(\bx,\by)=$1/\rm{reg}$(F;\bx|\by)$ where  \rm{reg}$(F;\bx|\by)$ is the modulus of regularity employed in \cite{DonRoc14,DonRoc04,DonLewRoc03}.
This quantity enables one to check how large a perturbation can be before a `good behavior' of the solution mapping breaks down.

Metric regularity necessitates uniformity in estimates involving local perturbations of both $\bx$ and $ \by$.
The term $d(y,F(x))$ measures the residual when $y\notin F(x)$. Strictly speaking, inequality \eqref{D1.1-4} provides an estimate of how distant a point $x$ is from being a solution to the generalized equation \eqref{GE}.
Computing a residual is much easier than finding a solution to the generalized equation. 
Such an estimate is crucial for numerous optimization problems, especially for computational purposes.

The study of the H\"older metric regularity has attracted considerable attention due to the fact that the conventional (linear) metric regularity fails in many practical situations.
The number of publications dedicated to studying H\"older metric regularity is large, see \cite{CuoKru21.2,FraQui12,Chu15,ChuKim16,YenYaoKie08,LeePha22} and the references therein.

\begin{definition}\label{D1.1}
Let $X,Y$ be metric spaces, $F:X\rightrightarrows Y$, $(\bx,\by)\in \gph F$,  and $q>0$.
The mapping $F$ is metrically regular of order $q$ at $(\bx,\by)$  if there exist $\tau>0$ and  $\de>0$ such that
\begin{align}\label{D1.1-1}
\tau d(x,F\iv(y))\le d^q(y,F(x))
\end{align}
for all $x\in B_{\de}(\bx)$ and $y\in B_{\de}(\by)$.
The supremum of $\tau$ such that \eqref{D1.1-1} holds for some $\de>0$ is called the modulus of regularity of order $q$ of $F$ at $(\bx,\by)$ and denoted by 
\rm{rg}$^qF(\bx,\by)$.
\end{definition}

The next definition recalls the concept of H\"older continuity of set-valued mappings \cite{KlaKruKum12,YenYaoKie08}.

\begin{definition}\label{D1.5}
Let $X,Y$ be metric spaces, $\Phi:X\rightrightarrows Y$, $(\bx,\by)\in\gph F$,  and $q>0$.
The mapping $\Phi$ is H\"older continuous of order $q$ at $(\bx,\by)$  if there exist $\tau>0$ and  $\de>0$ such that
\begin{align*}
d^q(y,\Phi(x))\le \tau d(x,x')
\end{align*}	
for all $x,x'\in B_{\de}(\bx)$ and $y\in \Phi(x')\cap B_{\de}(\by)$.
The infimum of $\tau$ such that the above inequality holds for some $\de>0$ is called the modulus of H\"older continuity and denoted by ${\rm{lip}}^q \Phi(\bx,\by)$.
\end{definition}	

When  $\Phi$ is single-valued, the property in Definition~\ref{D1.5} reduces to the conventional  H\"older continuity \cite{HeNg18,Kir98}, and the  modulus is denoted by ${\rm{lip}}^q \Phi(\bx)$.
The absence of the above  properties is characterized by ${\rm{rg}}^qF(\bx,\by)=0$ and ${\rm{lip}}^q\Phi(\bx,\by)=+\infty$, respectively.
In the case $q=1$, the properties  reduce to the convetional metric regularity and Aubin property  \cite{DonRoc14,DonLewRoc03}.

The next statement is straighforward.
\begin{proposition}
Let $X,Y$ be metric spaces, $F:X\rightrightarrows Y$, $(\bx,\by)\in \gph F$,  and $q>0$.
The mapping $F$ is metrically regular of order $q$ at $(\bx,\by)$  if and only if $F\iv$ is H\"older continuous of order $\frac{1}{q}$ at $(\by,\bx)$.
Moreover,
\begin{gather*}
{\rm{rg}}^qF(\bx,\by)=({\rm{lip}}^{\frac{1}{q}} F\iv(\by,\bx))^{-q}.
\end{gather*}	
\end{proposition}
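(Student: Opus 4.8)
The plan is to prove the two implications of the equivalence directly from the definitions, and then extract the precise modulus relation from the same chain of inequalities. First I would unwind what the two properties say. Metric regularity of order $q$ of $F$ at $(\bx,\by)$ with constant $\tau$ and radius $\de$ reads $\tau d(x,F\iv(y))\le d^q(y,F(x))$ for all $x\in B_\de(\bx)$, $y\in B_\de(\by)$. H\"older continuity of order $\frac1q$ of $F\iv$ at $(\by,\bx)$ with constant $\sigma$ and radius $\de'$ reads $d^{1/q}(x,F\iv(y))\le \sigma d(y,y')$ for all $y,y'\in B_{\de'}(\by)$ and $x\in F\iv(y')\cap B_{\de'}(\bx)$.

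The key observation linking the two is that $d(x,F\iv(y))$ and $d(y,F(x))$ are really the same kind of object viewed from either side of the graph: $x\in F\iv(y')$ is equivalent to $y'\in F(x)$, so for $x\in F\iv(y')$ one has $d(y,F(x))\le d(y,y')$, and conversely, by taking an infimum over $y'\in F(x)$, $d(y,F(x))=\inf\{d(y,y'):x\in F\iv(y')\}$. So to go from metric regularity to H\"older continuity of the inverse: given $y,y'$ near $\by$ and $x\in F\iv(y')\cap B_\de(\bx)$, apply \eqref{D1.1-1} at this $x$ and this $y$ to get $\tau d(x,F\iv(y))\le d^q(y,F(x))\le d^q(y,y')$; raising to the power $1/q$ gives $d^{1/q}(x,F\iv(y))\le \tau^{-1/q}d(y,y')$, which is exactly the H\"older estimate with constant $\tau^{-1/q}$. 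For the converse, given $x\in B_{\de'}(\bx)$ and $y\in B_{\de'}(\by)$, pick (for an arbitrary $\eps>0$) a point $y'\in F(x)$ with $d(y,y')\le d(y,F(x))+\eps$; shrinking $\de'$ if necessary one checks $y'\in B_{\de'}(\by)$ and $x\in F\iv(y')$, so the H\"older estimate yields $d^{1/q}(x,F\iv(y))\le \sigma d(y,y')\le \sigma(d(y,F(x))+\eps)$; letting $\eps\downarrow0$ and raising to the power $q$ gives $\sigma^{-q}d(x,F\iv(y))\le d^q(y,F(x))$, i.e. metric regularity of order $q$ with constant $\sigma^{-q}$.

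For the modulus equality, I would argue that the constant transformation $\tau\mapsto\tau^{-1/q}$ is an order-reversing bijection on the admissible constants, so taking suprema on one side corresponds to taking infima on the other. Concretely, every $\tau$ admissible for ${\rm rg}^qF$ produces an admissible $\tau^{-1/q}$ for ${\rm lip}^{1/q}F\iv$, and vice versa; hence ${\rm lip}^{1/q}F\iv(\by,\bx)=\big({\rm rg}^qF(\bx,\by)\big)^{-1/q}$, which rearranges to ${\rm rg}^qF(\bx,\by)=\big({\rm lip}^{1/q}F\iv(\by,\bx)\big)^{-q}$. One should also check the degenerate cases separately, using the stated conventions ${\rm rg}^qF=0 \Leftrightarrow$ metric regularity fails and ${\rm lip}^{1/q}F\iv=+\infty \Leftrightarrow$ H\"older continuity fails, so the boundary values match.

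The main obstacle is purely bookkeeping rather than conceptual: ensuring the neighbourhood radii are handled correctly so that the auxiliary point $y'$ chosen near $F(x)$ genuinely lies in the ball where the hypothesis applies, and making the $\eps$-argument for the infimum rigorous when $F(x)$ is not closed (so the infimum $d(y,F(x))$ may not be attained). A minor additional point is the trivial case $d(x,F\iv(y))=0$ or $F(x)=\es$, which must be dispatched by the usual convention $d(y,\es)=+\infty$; with these caveats the proof is a direct translation between the two inequalities.
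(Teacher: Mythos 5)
Your proof is correct; the paper itself labels this proposition ``straightforward'' and supplies no proof, and your argument is precisely the natural unwinding of the two definitions that the author leaves implicit: raise \eqref{D1.1-1} to the power $1/q$, use $d(y,F(x))=\inf\{d(y,y'):x\in F^{-1}(y')\}$, and observe that the order-reversing bijection $\tau\mapsto\tau^{-1/q}$ carries the supremum defining ${\rm rg}^qF(\bx,\by)$ onto the infimum defining ${\rm lip}^{1/q}F^{-1}(\by,\bx)$.

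The one wording slip is in the reverse implication: one cannot arrange $y'\in B_{\de'}(\by)$ by ``shrinking $\de'$'', since $\de'$ is the radius on which the H\"older \emph{hypothesis} holds and shrinking it only weakens what you are allowed to use. What must be shrunk is the radius $\de$ on which metric regularity is to be \emph{verified}, and one must additionally restrict to pairs $(x,y)$ with $d(y,F(x))$ small so that the near-minimiser $y'\in F(x)$ does land in $B_{\de'}(\by)$. This is exactly what Proposition~\ref{P3.1} licenses; alternatively, the remaining case $d(y,F(x))\ge\de_0$ can be dispatched directly by applying the H\"older estimate with $y'=\by$ and $x'=\bx$ to bound $d(\bx,F^{-1}(y))$, and then taking $\de$ small enough. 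Since you explicitly flag the radius bookkeeping as the only delicate point, this is a matter of presentation rather than a gap.
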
	

The property in Definition~\ref{D1.1} does not change if one imposes an upper bound on the right-hand side of \eqref{D1.1-1}; cf.
\cite[Excercise~2.12]{Iof17} and \cite[Proposition~1.137]{Pen13}.
\begin{proposition}\label{P3.1}
Let $X$ and $Y$ be metric spaces, $F:X\rightrightarrows Y$, $(\bx,\by)\in \gph F$, and $q>0$.
The mapping $F$ is metrically regular of order $q$ at $(\bx,\by)$ if and only if there exist $\tau>0$, $\de>0$, and $\mu>0$ such that inequality \eqref{D1.1-1} holds for all $x\in B_{\de}(\bx)$ and $y\in B_{\de}(\by)$ with $d^q(y,F(x))<\tau\mu$.
\end{proposition}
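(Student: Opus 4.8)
The plan is to establish the nontrivial (``if'') implication; the converse is immediate. Indeed, if $F$ is metrically regular of order $q$ at $(\bx,\by)$ with constants $\tau,\de$ as in Definition~\ref{D1.1}, then \eqref{D1.1-1} holds for \emph{all} $x\in B_{\de}(\bx)$ and $y\in B_{\de}(\by)$, hence in particular for those additionally satisfying $d^q(y,F(x))<\tau\mu$, whatever $\mu>0$ one picks.

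For the forward implication, let $\tau,\de,\mu>0$ be as in the statement, so that \eqref{D1.1-1} holds whenever $x\in B_{\de}(\bx)$, $y\in B_{\de}(\by)$, and $d^q(y,F(x))<\tau\mu$. I would fix a smaller radius $\de'\in(0,\de]$ small enough that
\begin{gather*}
(\de')^q<\tau\mu \AND \de'+\frac{(\de')^q}{\tau}\le\mu;
\end{gather*}
both conditions hold once $\de'$ is sufficiently small. Now pick arbitrary $x\in B_{\de'}(\bx)$ and $y\in B_{\de'}(\by)$ and distinguish two cases according to the size of the residual $d^q(y,F(x))$. If $d^q(y,F(x))<\tau\mu$, the assumption applies directly and yields $\tau d(x,F\iv(y))\le d^q(y,F(x))$, i.e.\ \eqref{D1.1-1}.

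The remaining case $d^q(y,F(x))\ge\tau\mu$ is the heart of the matter: the residual no longer controls $d(x,F\iv(y))$ by itself, so one needs an independent upper bound on the latter. The key observation is that the hypothesis may be applied with the base point $\bx$ in the role of $x$. Since $\by\in F(\bx)$, we have $d^q(y,F(\bx))\le d^q(y,\by)\le(\de')^q<\tau\mu$, so \eqref{D1.1-1} gives $\tau d(\bx,F\iv(y))\le d^q(y,F(\bx))\le(\de')^q$; in particular $F\iv(y)\neq\es$ and $d(\bx,F\iv(y))\le(\de')^q/\tau$. The triangle inequality then yields
\begin{gather*}
d(x,F\iv(y))\le d(x,\bx)+d(\bx,F\iv(y))\le\de'+\frac{(\de')^q}{\tau}\le\mu,
\end{gather*}
whence $\tau d(x,F\iv(y))\le\tau\mu\le d^q(y,F(x))$. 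In either case \eqref{D1.1-1} holds with the constant $\tau$ and the radius $\de'$, so $F$ is metrically regular of order $q$ at $(\bx,\by)$.

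The step I would flag as the crux is precisely the treatment of the ``large residual'' regime: it is handled not through the residual at $x$, but by exploiting the membership $\by\in F(\bx)$ together with the hypothesis applied at $\bx$, which is what promotes the residual-restricted estimate to a genuine metric-regularity estimate near $(\bx,\by)$. The rest is routine bookkeeping — choosing $\de'$ so that the two displayed smallness requirements are met simultaneously — and a closer look at the first case shows that $F\iv(y)\neq\es$ there as well, so no degenerate configuration is overlooked.
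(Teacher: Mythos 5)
Your proof is correct, and the ``if'' direction is indeed the nontrivial one. The paper itself supplies no proof of Proposition~\ref{P3.1}, referring instead to \cite[Exercise~2.12]{Iof17} and \cite[Proposition~1.137]{Pen13}; your argument matches the standard route used there for the case $q=1$, adapted to $q>0$: shrink the radius to $\de'$, split on whether the residual is below or above the threshold $\tau\mu$, and in the large-residual regime obtain the a priori bound $d(x,F\iv(y))\le\mu$ by applying the restricted estimate at the base point $\bx$ (where $\by\in F(\bx)$ forces a small residual), so that $\tau d(x,F\iv(y))\le\tau\mu\le d^q(y,F(x))$ trivially. The constants $\tau$ and $\mu$ survive unchanged and only $\de$ shrinks, which is exactly the feature the paper highlights in the remark following the proposition. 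You also correctly observe that $F\iv(y)\neq\es$ in both regimes, closing the potential $+\infty$ degeneracy. No gaps.
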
	

If  inequality \eqref{D1.1-1} holds for all $x\in B_{\de}(\bx)$ and $y\in B_{\de}(\by)$ with $d^q(y,F(x))<\tau\mu$, then we often say that $F$ is metrically regular of order $q$ at $(\bx,\by)$ with  $\tau$, $\delta$ and $\mu$.
The condition $d^q(y,F(x))<\tau\mu$ in Proposition~\ref{P3.1} can be replaced by $d^q(y,F(x))<\mu$.
However, we prefer to keep the current form since it results in `neater' statements in Section~\ref{S4}.
In any case, adding such a condition does not affect the constant $\tau$ in Definition~\ref{D1.1}, but  can have an effect on the value of $\de$.
We consider in this paper the H\"older metric regularity for the case $q\in(0,1]$, although some results are also valid when $q>1$.

In the current paper, we study single-valued additive perturbations of the left-hand side of \eqref{GE}; cf. \cite{DonRoc14}. 
Set-valued perturbations were considered  in \cite{AdlNgaVu17,DonFra12,HeXu22,Iof01}.
What effect do such perturbations have on the metric regularity propery? 
The question is answered by the fundamental estimation arising from the works of Lyusternik \cite{Lyu34} and Graves \cite{Gra50}.
It shows that metric regularity of a set-valued mapping is preserved if the perturbation function  is Lipschitz continuous with  a sufficiently small Lipschitz constant.
Interested readers are referred to 
\cite{DonRoc14,Iof17,DmiMilOsm80,Iof00,DonFra10,DonFra12} and the references therein.
\begin{theorem}\label{T1.2}
Let $X$ be a complete metric space, $Y$ be a linear space with a shift-invariant metric, $F:X\rightrightarrows Y$, $(\bx,\by)\in\gph F$, $f: X\rightarrow Y$ with $f(\bx)=0$, and $\gph F$ be closed near
$(\bx,\by)$.
Then
\begin{align*}
 {\rm{rg}}(F+f)(\bx,\by)\ge  {\rm{rg}}F(\bx,\by)-{\rm{lip}}f(\bx).
\end{align*}	
\end{theorem}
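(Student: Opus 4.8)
The plan is to establish the quantitative inequality directly by a Lyusternik--Graves successive-approximation scheme. Fix arbitrary $\tau<{\rm{rg}}F(\bx,\by)$ and $\kappa>{\rm{lip}}f(\bx)$. If $\kappa\ge\tau$ then the right-hand side of the asserted inequality is $\le0$ while ${\rm{rg}}(F+f)(\bx,\by)\ge0$, so there is nothing to prove; hence assume $\kappa<\tau$ and pick $\tau''\in(\kappa,\tau)$. Since $\by\in F(\bx)$ and $f(\bx)=0$, shift-invariance of the metric on $Y$ gives $\by\in(F+f)(\bx)$, so $(\bx,\by)\in\gph(F+f)$. I shall show that $F+f$ is metrically regular of order $1$ at $(\bx,\by)$ with constant $\tau''-\kappa$ in the bounded-residual sense of Proposition~\ref{P3.1}; since $\tau,\kappa,\tau''$ are otherwise arbitrary, letting $\kappa\downarrow{\rm{lip}}f(\bx)$, $\tau''\uparrow\tau$ and $\tau\uparrow{\rm{rg}}F(\bx,\by)$ then yields ${\rm{rg}}(F+f)(\bx,\by)\ge{\rm{rg}}F(\bx,\by)-{\rm{lip}}f(\bx)$.

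\emph{Choosing the constants.} By Proposition~\ref{P3.1} there are $\de>0$ and $\mu>0$ with $\tau\,d(x,F\iv(y))\le d(y,F(x))$ whenever $x\in B_\de(\bx)$, $y\in B_\de(\by)$ and $d(y,F(x))<\tau\mu$; shrinking $\de$ I may also assume $d(f(x),f(x'))\le\kappa\,d(x,x')$ for $x,x'\in B_\de(\bx)$ (shift-invariance lets me write the Lipschitz estimate for $f$ in this metric form) and that $\gph F\cap(B_\de(\bx)\times B_\de(\by))$ is closed. Then I fix auxiliary radii $\de'\le\de$ and $\mu'\le\mu$ small enough that
\[
\de'+\mu'<\de,\qquad \de'+\kappa(\de'+\mu')<\de,\qquad (\tau''-\kappa)\mu'\le\tau\mu .
\]

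\emph{The iteration.} Let $x\in B_{\de'}(\bx)$, $y\in B_{\de'}(\by)$ with $r:=d(y,(F+f)(x))<(\tau''-\kappa)\mu'$; by shift-invariance $d(y,(F+f)(x))=d(y-f(x),F(x))$ and $(F+f)\iv(y)=\{u:y-f(u)\in F(u)\}$. Put $x_0:=x$ and, inductively, $y_n:=y-f(x_n)$; the claim is that $x_n\in B_\de(\bx)$, $y_n\in B_\de(\by)$ and $d(y_n,F(x_n))<\tau\mu$, which permits using metric regularity of $F$ to select $x_{n+1}\in F\iv(y_n)$ with $d(x_n,x_{n+1})\le\tfrac1{\tau''}d(y_n,F(x_n))$ (possible since $\tau''<\tau$; if $d(y_n,F(x_n))=0$, closedness of $\gph F$ forces $y_n\in F(x_n)$ and the sequence stabilises). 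For $n=0$ this bound equals $r/\tau''$; for $n\ge1$, because $y_{n-1}\in F(x_n)$, shift-invariance and the Lipschitz estimate give $d(y_n,F(x_n))\le d(y_n,y_{n-1})=d(f(x_n),f(x_{n-1}))\le\kappa\,d(x_{n-1},x_n)$, whence $d(x_n,x_{n+1})\le\theta\,d(x_{n-1},x_n)$ with $\theta:=\kappa/\tau''\in(0,1)$. Thus $d(x_n,x_{n+1})\le\theta^n r/\tau''$, so $(x_n)$ is Cauchy and converges in the complete space $X$ to some $x_*$ with $d(x,x_*)\le\sum_{n\ge0}\theta^n r/\tau''=r/(\tau''-\kappa)$. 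The three displayed inequalities on $\de',\mu'$ are exactly what closes the induction: $d(x_n,\bx)\le\de'+r/(\tau''-\kappa)<\de'+\mu'<\de$, then $d(y_n,\by)\le d(y,\by)+d(f(x_n),f(\bx))\le\de'+\kappa(\de'+\mu')<\de$, and $d(y_n,F(x_n))\le r<(\tau''-\kappa)\mu'\le\tau\mu$. Finally, letting $n\to\infty$ in $y-f(x_n)\in F(x_{n+1})$ and using continuity of $f$ together with closedness of $\gph F$ near $(\bx,\by)$ gives $y-f(x_*)\in F(x_*)$, i.e.\ $x_*\in(F+f)\iv(y)$. Hence $d(x,(F+f)\iv(y))\le d(x,x_*)\le r/(\tau''-\kappa)$, which is the required estimate; Proposition~\ref{P3.1} then yields ${\rm{rg}}(F+f)(\bx,\by)\ge\tau''-\kappa$.

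\emph{Main obstacle.} The genuine work is the neighbourhood bookkeeping: the iterates $x_n$ are not known in advance, yet they must never leave $B_\de(\bx)$ (where the regularity estimate for $F$ and the Lipschitz estimate for $f$ are available), the shifted points $y_n=y-f(x_n)$ must stay in $B_\de(\by)$, and the residuals $d(y_n,F(x_n))$ must stay below the threshold $\tau\mu$ coming from Proposition~\ref{P3.1}. Invoking the bounded-residual reformulation of Proposition~\ref{P3.1} (rather than Definition~\ref{D1.3}) is precisely what makes this manageable, since it lets one impose $r<(\tau''-\kappa)\mu'$ at the outset. A parallel route, which I would mention but not carry out, replaces the explicit iteration by the Ekeland variational principle applied to the lower semicontinuous function $u\mapsto d(y,(F+f)(u))$ on a suitable closed ball of the complete space $X$: this produces $x_*$ within distance $r/(\tau''-\kappa)$ of $x$ minimising a tilted functional, and $d(y,(F+f)(x_*))>0$ is then excluded by a single descent step built from the metric regularity of $F$ and the Lipschitz property of $f$.
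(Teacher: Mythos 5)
Your proof is correct. The paper does not actually prove Theorem~\ref{T1.2} directly; it cites it and instead proves the H\"older generalization, Theorem~\ref{T5.1}, which recovers Theorem~\ref{T1.2} at $q=1$. Comparing with that proof: the paper defines $\Phi(u):=F\iv(-f(u)+y)$ and invokes the Dontchev--Hager set-valued contraction principle (Lemma~\ref{L2.1}) to obtain a fixed point $\hat x\in\Phi(\hat x)$, after verifying the three conditions of that lemma (closedness of $\gph\Phi$ on a ball, the smallness estimate $d(x,\Phi(x))<\de'(1-\theta)$, and the excess contraction bound $e(\Phi(u)\cap B_{\de'}(x),\Phi(v))\le\theta\,d(u,v)$). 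You instead run the successive-approximation iteration $x_{n+1}\in F\iv(y-f(x_n))$ by hand, proving the geometric decay $d(x_n,x_{n+1})\le\theta^n r/\tau''$ with $\theta=\kappa/\tau''$ and passing to the limit $x_*$. This is in effect the standard proof of Lemma~\ref{L2.1} unwound for the specific map $\Phi$; the underlying mechanism (Banach-type iteration with the regularity estimate supplying the step and the Lipschitz estimate supplying the contraction factor) is the same. What your route buys is self-containment --- no abstract fixed-point lemma to cite or verify --- at the price of re-deriving the Cauchy estimate; what the paper's route buys is modularity, since the same Lemma~\ref{L2.1} is reusable for the full H\"older case and for the set-valued perturbations it references. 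Your neighbourhood bookkeeping (the three inequalities constraining $\de',\mu'$) plays exactly the role of conditions (ii)--(iii) of Lemma~\ref{L2.1} and is correct, including the treatment of the degenerate case $d(y_n,F(x_n))=0$ via closedness of $\gph F$ near $(\bx,\by)$, the use of shift-invariance to convert $d(y-f(x_n),y-f(x_{n-1}))$ into $d(f(x_n),f(x_{n-1}))$, and the passage to the limit $y-f(x_*)\in F(x_*)$. The mention of Proposition~\ref{P3.1} is slightly redundant here (since $\tau<{\rm rg}F(\bx,\by)$ already gives full-strength regularity on a ball without a residual cap), but it is harmless. Your closing remark that an Ekeland-based descent argument is a parallel route is accurate and reflects the method the paper actually uses in Section~\ref{S4} for the slope criteria.
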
	

The right-hand side quantity in the above inequality can be undefined when both moduli equal plus infinity.
To address this situation, we employ in the current paper the convention that $(+\infty)-(+\infty)=0.$

In the paper \cite{Don15}, Dontchev posed an open question about a possible generalization of Theorem~\ref{T1.2} to the H\"older setting.
Recently, He and Ng \cite[Theorem~3]{HeNg18} have proved a H\"older version of Theorem~\ref{T1.2} by establishing a relation for the corresponding constants in the definitions of metric regularity and H\"older continuity.
In \cite{FraQui12,Xu22}, the authors have provided some  primal estimations for  the modulus of H\"older regularity.
In the current paper, we establish an improved version of  \cite[Theorem~3]{HeNg18}.
The results are applied to convergence analysis  of a Newton-type method  enhancing  \cite[Theorem~15.1]{Don21}.

The paper is organized as follows. 
The next Section~\ref{S2} provides some preliminary results used throughout the paper. 
Section~\ref{S4} studies slope  and coderivative necessary and sufficient conditions for H\"older metric regularity.
We establish in Section~\ref{S6} a H\"older version of the extended Lyusternik-Graves theorem. 
The results are applied in Section~\ref{S5} to convergence analysis of a Newton-type method.
The final Section~\ref{S7} proposes some open problems for further research.
\section{Preliminaries}\label{S2}
Our basic notation is standard, see, e.g., \cite{Mor06.1,RocWet98,DonRoc14}.
Throughout the paper, if not explicitly stated otherwise, $X$ and $Y$ are  metric spaces.
Products of metric or normed spaces are assumed to be equipped with the maximum distance or norm.
The topological dual of a normed space $X$ is denoted by $X^*$, while $\langle\cdot,\cdot\rangle$ denotes the bilinear form defining the pairing between the two spaces.
In a primal space, the open and closed balls with center $x$ and radius $\delta>0$ are denoted, respectively, by $B_\delta(x)$ and $\overline{B}_\de(x)$, while $\B$ and $\overline{\B}$ stand for, respectively, the open and closed unit balls.
The open unit ball in the dual space is denoted by $\B^*$.
A set $\Omega$ is said to be closed near $\bx\in\Omega$ if there exists a $\de>0$ such that  $\Omega\cap\overline B_\de(\bx)$ is closed.
Symbols $\R$, $\R_+$ and $\N$ stand for the real line, the set of all nonnegative reals, and the set of all nonnegative integers, respectively.

A metric $d$ on a vector space $X$ is called shift-invariant if 
$d(x'+z,x+z)=d(x,x')$ for all $x',x,z\in X$.
For subsets $A,B$ of a metric space $X$, the excess of $A$ beyond $B$ is defined by
$e(A,B):=\sup_{x\in A}d(x,B)$
with the convention that $e(\emptyset,B):=0$ when $B\ne\emptyset$ and $+\infty$ otherwise.

Let $\{x_k\}_{k\in \N}$  be a sequence in a normed space $X$  converging to a point $\bx\in X$.
It is said to converge quadratically to $\bx$ if 
there exist $\gamma>0$ and $k_0\in\N$ such that
$\|x_{k+1}-\bx\|\le \gamma \|x_k-\bx\|^2$ for all $k\ge k_0$.

Let $X$ be a normed space, $\Omega\subset X$, and $f:X\to\R\cup\{+\infty\}$.
The Fr\'echet normal cone to $\Omega$ at $\bx\in \Omega$ and the Fr\'echet subdifferential of $f$ at $\bar x\in\dom f:=\{x\in X\mid f(x)< +\infty\}$ are defined, respectively, by
\begin{gather*}
N_{\Omega}(\bx):= \left\{x^\ast\in X^\ast\mid
\limsup_{\Omega\ni x\to\bar x,\,x\ne \bx} \frac {\langle x^\ast,x-\bx\rangle}
{\|x-\bx\|} \le 0 \right\},\\	
\partial f(\bar x):=\left\{x^*\in X^*\mid \liminf_{\substack{x\to \bar x,\,x\ne\bx}} \dfrac{f(x)-f(\bar x)-\langle x^*,x-\bar x\rangle}{\|x-\bar x\|}\ge 0\right\}.
\end{gather*}
By convention, we set $N_{\Omega}(\bx) :=\es$ if $\bx\notin \Omega$ and $\partial{f}(\bx):=\es$ if $\bx\notin\dom f$.
If $\Omega$ and $f$ are convex, the aforementioned concepts reduce to  the normal cone and subdifferential in the sense of convex analysis.
If $f$ is Fr\'echet differentiable with a derivative $\nabla f(\bx)$, then $\partial f(\bar x)=\{\nabla f(\bx)\}$.

The Fr\'echet coderivative of a set-valued mapping  $F:X\rightrightarrows Y$ between normed spaces at $(\bx,\by)\in\gph F$ is a set-valued mapping $D^*F(\bx,\by):Y^*\rightrightarrows X^*$ defined for any $y^*\in Y^*$ by
\begin{align}\label{coder}
D^*F(\bx,\by)(y^*):=\{x^*\in X^*\mid (x^*,-y^*)\in N_{\gph F}(\bx,\by)\}.
\end{align}

The following results are well known  \cite{Kru03,Mor06.1,Zal02}.

\begin{lemma}\label{L2.4}
Let $X$ be a normed space, $f:X\to\R\cup\{+\infty\}$, and $\bx\in\dom f$.
The following statements hold.
\begin{enumerate}
\item	
If $x$ is a point of local minimum of $f$, then $0\in\sd f(\bx)$.
\item 
$\partial(\lambda f)(\bx)=\lambda\partial f(\bx)$ for any $\lambda>0$.
\item
$\sd\|\cdot\|(0)=\{x^*\in X^*\mid
\|x^*\|\le 1\}$.
\item
$\sd\|\cdot\|(x)=\{x^*\in X^*\mid \langle x^*,x\rangle=\|x\|\;\;
\text{and}
\;\;
\|x^*\|= 1\},
\;\;
x\ne 0$.
\end{enumerate}
\end{lemma}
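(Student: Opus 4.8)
The plan is to verify the four assertions one at a time directly from the definition of the \Fr{} subdifferential recalled above, invoking for the last two the stated fact that on convex functions the \Fr{} subdifferential agrees with the convex-analysis subdifferential.

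For (i), if $\bx$ is a local minimizer of $f$ then $f(x)-f(\bx)\ge 0$ for all $x$ near $\bx$, so the quotient defining $\sd f(\bx)$ with the functional $x^*=0$ has nonnegative $\liminf$, which is precisely the condition $0\in\sd f(\bx)$. For (ii), I would observe that for any $\lambda>0$ and $x^*\in X^*$,
\[
\frac{\lambda f(x)-\lambda f(\bx)-\langle x^*,x-\bx\rangle}{\|x-\bx\|}=\lambda\,\frac{f(x)-f(\bx)-\langle \lambda^{-1}x^*,x-\bx\rangle}{\|x-\bx\|},
\]
and since $\lambda>0$ the left-hand side has nonnegative $\liminf$ as $x\to\bx$ iff the quotient with $\lambda^{-1}x^*$ in place of $x^*$ does; hence $x^*\in\sd(\lambda f)(\bx)$ iff $\lambda^{-1}x^*\in\sd f(\bx)$, that is, $\sd(\lambda f)(\bx)=\lambda\,\sd f(\bx)$.

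For (iii) and (iv), I would use that $\|\cdot\|$ is convex, so $x^*\in\sd\|\cdot\|(\bx)$ iff $\|z\|\ge\|\bx\|+\langle x^*,z-\bx\rangle$ for all $z\in X$. Taking $\bx=0$ this reads $\langle x^*,z\rangle\le\|z\|$ for all $z$, i.e. $\|x^*\|\le 1$, which is (iii). For $\bx\ne 0$, testing the subgradient inequality at $z=0$ and at $z=2\bx$ yields $\langle x^*,\bx\rangle\ge\|\bx\|$ and $\langle x^*,\bx\rangle\le\|\bx\|$, hence $\langle x^*,\bx\rangle=\|\bx\|$; feeding this back, the inequality reduces to $\langle x^*,z\rangle\le\|z\|$ for all $z$, so $\|x^*\|\le 1$, while $\|\bx\|=\langle x^*,\bx\rangle\le\|x^*\|\,\|\bx\|$ with $\bx\ne 0$ forces $\|x^*\|\ge 1$; thus $\|x^*\|=1$. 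The reverse inclusion is immediate, since $\|x^*\|=1$ and $\langle x^*,\bx\rangle=\|\bx\|$ give $\|\bx\|+\langle x^*,z-\bx\rangle=\langle x^*,z\rangle\le\|z\|$.

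There is no genuine obstacle here, the statement being classical; but if one prefers to bypass the convexity reduction and argue straight from the \Fr{} definition, the one mildly delicate point is the $\liminf$ computation behind (iii): along a ray $x=tu$ with $\|u\|=1$ and $t\downarrow 0$ the relevant quotient equals $1-\langle x^*,u\rangle$, so the $\liminf$ over all $x\to 0$ equals $1-\sup_{\|u\|=1}\langle x^*,u\rangle=1-\|x^*\|$, which is $\ge 0$ exactly when $\|x^*\|\le 1$; this uses only the definition of the dual norm, no Hahn--Banach. I would present the convex route as the main line and mention this direct computation as an alternative.
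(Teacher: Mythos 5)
Your proof is correct. The paper does not actually prove Lemma~\ref{L2.4}; it declares these facts ``well known'' and cites \cite{Kru03,Mor06.1,Zal02}, so there is no authorial argument to compare against. Your proposal fills this in with the standard reasoning: (i) and (ii) fall out of the \Fr{} definition directly (the sign of the quotient for $x^*=0$, respectively a harmless rescaling of the test functional), and (iii)--(iv) reduce to the convex subdifferential, which the paper itself licenses in the sentence following the definition of $\partial f$. The ray computation you offer as an alternative for (iii) is also correct: since the difference quotient for $\|\cdot\|$ at $0$ is $1-\langle x^*,x/\|x\|\rangle$ and thus purely directional, its $\liminf$ as $x\to 0$ is $1-\|x^*\|$ by definition of the dual norm. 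One cosmetic remark: item (i) of the lemma has a typo ($x$ should read $\bx$), which you implicitly and correctly corrected.
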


Let $X$ be a metric space,  $f:X\rightarrow\R\cup\{+\infty\}$. 
The slope  \cite{NgaThe08,Iof00,AzeCorLuc02,Kru15} of $f$ at $x\in\dom f$ is defined by
\begin{align*}
|\nabla f|(x):=\limsup_{u\rightarrow x,u\ne x}\dfrac{ [f(x)-f(u)]_+}{d(x,u)} 
\end{align*}
where $\al_+:=\max\{0,\al\}$ for any $\al\in\R$.
If $x\notin\dom f$, we set ${|\nabla f|(x):=+\infty}$.

The next statement offers chain rules for slopes  \cite[Lemma~1.1]{CuoKru21} and Fr\'echet subdifferentials \cite[Proposition~2.1]{CuoKru20.2}.
\begin{lemma}\label{L2.6}
Let $X$ be a metric space, $f: X\rightarrow\R\cup\{+\infty\}$, $\bx\in\dom f$ with $f(\bx)>0$, and $q>0$.
The following statements hold.
\begin{enumerate}
\item 
$|\nabla f^q|(\bx)=qf^{q-1}(\bx)|\nabla f|(\bx)$.
\item	
If $X$ is a normed space, then
$\partial f^q(\bx)=qf^{q-1}(\bx)\partial f(\bx)$.
\end{enumerate}
\end{lemma}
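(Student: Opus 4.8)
The plan is to reduce both assertions to the behaviour of the scalar function $\psi(t):=t^q$ near the point $c:=f(\bx)>0$, where $\psi$ is of class $C^1$ with $\psi'(c)=qc^{q-1}>0$. Since $f^q$ is only meaningful (for non-integer $q$) when $f\ge0$, I treat this nonnegativity as a standing hypothesis — it holds in all the intended applications, where $f$ is a distance function — and I note that points $u$ with $f(u)=+\infty$ contribute nothing to either slope and constrain neither subdifferential, so they may be ignored. The one elementary tool is the mean-value factorization $\psi(c)-\psi(t)=g(t)(c-t)$, valid for $0\le t\le c$, where $g(t):=(c^q-t^q)/(c-t)$ for $t\ne c$ and $g(c):=qc^{q-1}$; the function $g$ is continuous and strictly positive on the compact interval $[0,c]$ — hence bounded above and bounded away from $0$ there — and $g(t)\to qc^{q-1}$ as $t\to c$.

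For (i) I would first establish the single inequality $|\nabla f^q|(\bx)\le qc^{q-1}|\nabla f|(\bx)$, so assume $|\nabla f|(\bx)<+\infty$. The factorization gives, for every $u$ near $\bx$,
\begin{align*}
\frac{[c^q-f^q(u)]_+}{d(\bx,u)}=g(f(u))\,\frac{[f(\bx)-f(u)]_+}{d(\bx,u)}
\end{align*}
(both sides vanish when $f(u)\ge c$, and otherwise $f(u)\in[0,c)$). Since $g$ is bounded on $[0,c]$, this already shows that $|\nabla f^q|(\bx)=+\infty$ would force $|\nabla f|(\bx)=+\infty$; hence $|\nabla f^q|(\bx)<+\infty$. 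Now pick $u_n\to\bx$, $u_n\ne\bx$, realizing $|\nabla f^q|(\bx)$, which we may assume to be positive, so that $f(u_n)<c$ for large $n$. The crucial point — substituting for the continuity of $f$, which is not available — is that $f(u_n)\to c$: indeed, if not, then $c-f(u_n)$ stays bounded away from $0$ along a subsequence while $d(\bx,u_n)\to0$, so along that subsequence $\frac{[f(\bx)-f(u_n)]_+}{d(\bx,u_n)}\to+\infty$, forcing $|\nabla f|(\bx)=+\infty$ — a contradiction. Hence $g(f(u_n))\to qc^{q-1}$, and passing to the limit in the displayed identity gives $|\nabla f^q|(\bx)=qc^{q-1}\lim_n\frac{[f(\bx)-f(u_n)]_+}{d(\bx,u_n)}\le qc^{q-1}|\nabla f|(\bx)$. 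The opposite inequality then comes for free: the argument just given used nothing about $f$ beyond $f\ge0$ and $f(\bx)>0$, so applying it to $f^q$ in place of $f$ with exponent $1/q$ in place of $q$, and using $(f^q)^{1/q}=f$, yields $|\nabla f|(\bx)\le\tfrac1q c^{1-q}|\nabla f^q|(\bx)$, that is, $|\nabla f^q|(\bx)\ge qc^{q-1}|\nabla f|(\bx)$.

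For (ii) I argue in the same spirit. It suffices to prove the inclusion $qc^{q-1}\partial f(\bx)\subseteq\partial f^q(\bx)$; the argument below uses nothing about $f$ beyond nonnegativity and $f(\bx)>0$, so the reverse inclusion follows, exactly as in part~(i), on applying it to $f^q$ with exponent $1/q$. Thus let $x^*\in\partial f(\bx)$ (if $\partial f(\bx)=\es$ there is nothing to prove) and set $\lambda:=qc^{q-1}$. First note that $\partial f(\bx)\ne\es$ forces $f$ to be lower semicontinuous at $\bx$, since otherwise $\frac{f(x)-c-\langle x^*,x-\bx\rangle}{\|x-\bx\|}\to-\infty$ along a suitable sequence $x\to\bx$. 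Fix $\eps>0$; choose $\eps_1,\eps_2>0$ small (to be tied to $\eps$ at the end, with $\eps_2<\lambda$), then $\eta\in(0,c)$ with $|\psi'(t)-\lambda|\le\eps_2$ for $|t-c|\le\eta$, and $\delta>0$ small enough that, for $\|x-\bx\|<\delta$, one has $f(x)-c-\langle x^*,x-\bx\rangle\ge-\eps_1\|x-\bx\|$ and (by lower semicontinuity) $f(x)>c-\eta$. For such $x$ I distinguish two cases. If $f(x)\ge c+\eta$, then $f^q(x)-c^q\ge(c+\eta)^q-c^q>0$ is bounded below by a positive constant, so $f^q(x)-c^q-\langle\lambda x^*,x-\bx\rangle\ge-\eps\|x-\bx\|$ once $\delta$ is small enough. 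If $c-\eta<f(x)<c+\eta$, the factorization gives $f^q(x)-c^q=\psi'(\xi)(f(x)-c)$ with $|\psi'(\xi)-\lambda|\le\eps_2$, whence
\begin{align*}
f^q(x)-c^q-\langle\lambda x^*,x-\bx\rangle&=\lambda\big(f(x)-c-\langle x^*,x-\bx\rangle\big)\\
&\quad+(\psi'(\xi)-\lambda)(f(x)-c);
\end{align*}
when $f(x)\le c$ I bound the last term using $c-f(x)\le(\|x^*\|+\eps_1)\|x-\bx\|$, and when $f(x)>c$ I use the positivity $f^q(x)-c^q>0$ together with $f(x)-c\ge\langle x^*,x-\bx\rangle-\eps_1\|x-\bx\|$; in either case a suitable choice of $\eps_1,\eps_2$ yields $f^q(x)-c^q-\langle\lambda x^*,x-\bx\rangle\ge-\eps\|x-\bx\|$. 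Hence $\lambda x^*\in\partial f^q(\bx)$.

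The main obstacle in both parts is precisely the possible discontinuity of $f$ at $\bx$: one cannot simply write $\psi(f(x))-\psi(c)\approx\psi'(c)(f(x)-c)$ for $x$ near $\bx$. The remedy is the dichotomy used above — either the relevant difference quotient (respectively directional residual) is under control, whence $f(x)$ is automatically pulled towards $c$ and the $C^1$ behaviour of $\psi$ at $c$ applies, or $f(x)$ stays away from $c$, in which case $f^q(x)-c^q$ is bounded away from $0$ (from above when $f(x)>c$, while for $f(x)<c$ the positive part $[f^q(x)-c^q]_+$ blows up the relevant quotient) and the desired inequality is immediate. Everything else is the routine $\eps_1,\eps_2,\eta,\delta$ bookkeeping sketched above, together with the $q\leftrightarrow1/q$ self-improvement that halves the work in each part.
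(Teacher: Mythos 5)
The paper does not actually prove Lemma~2.6 --- it cites \cite[Lemma~1.1]{CuoKru21} for the slope formula and \cite[Proposition~2.1]{CuoKru20.2} for the subdifferential formula --- so there is no in-paper argument to compare yours against. Taken on its own merits, your proof is correct. The two ideas that do the work are sound and well chosen: (a) the Cauchy-type factorization $\psi(c)-\psi(t)=g(t)(c-t)$ with $g$ continuous, positive and bounded on the compact interval $[0,c]$, which lets you handle the possible discontinuity of $f$ at $\bx$ --- whenever the relevant difference quotient stays under control, $f(u)$ is automatically forced toward $c=f(\bx)$, and the $C^1$ behaviour of $t\mapsto t^q$ at $c$ then kicks in; otherwise the $[\,\cdot\,]_+$-part of the quotient blows up and the inequality is vacuous --- and (b) the $q\leftrightarrow 1/q$ self-improvement, which converts a one-sided slope inequality (resp.\ a single subdifferential inclusion) into an equality at no extra cost, since the forward argument uses only $f\ge 0$, $f(\bx)>0$ and $q>0$, all of which are inherited by $f^q$ and $1/q$. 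Your standing hypothesis $f\ge 0$ is indeed tacit in the lemma (it is needed for $f^q$ to make sense for non-integer $q$, and $f$ is always a norm-distance wherever the lemma is invoked in the paper), so making it explicit is the right call.

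One step in part (ii) is stated a bit too tersely to stand on its own: in the sub-case $c<f(x)<c+\eta$, the crude bound $|(\psi'(\xi)-\lambda)(f(x)-c)|\le\eps_2\eta$ is only a constant, not $O(\|x-\bx\|)$, so the naive estimate fails and your appeal to ``positivity plus the lower bound on $f(x)-c$'' is doing real work that should be spelled out. The clean way is a dichotomy: if $\langle x^*,x-\bx\rangle\ge\eps_1\|x-\bx\|$, then $f(x)-c\ge\langle x^*,x-\bx\rangle-\eps_1\|x-\bx\|\ge 0$ and $f^q(x)-c^q\ge(\lambda-\eps_2)(f(x)-c)$ give $f^q(x)-c^q-\lambda\langle x^*,x-\bx\rangle\ge-\bigl(\eps_2\|x^*\|+(\lambda-\eps_2)\eps_1\bigr)\|x-\bx\|$; if instead $\langle x^*,x-\bx\rangle<\eps_1\|x-\bx\|$, then $f^q(x)-c^q>0$ alone already yields $f^q(x)-c^q-\lambda\langle x^*,x-\bx\rangle>-\lambda\eps_1\|x-\bx\|$. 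Either way the residual is $-K\|x-\bx\|$ with $K\to 0$ as $\eps_1,\eps_2\to 0$, which is exactly what is needed; I would include this dichotomy explicitly rather than leave it to the reader.
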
	

The other fundamental tools for our analysis are the contraction mapping principle for set-valued mappings  \cite{DonHag94.1}, the Ekeland variational principle \cite{Eke74}, and subdifferential sum rules \cite{Zal02,Kru03,Fab89}.

\begin{lemma}\label{L2.1}
Let $X$ be a complete metric space, $\Phi:X\rightrightarrows X$, $x\in X$, $\theta\in(0,1)$, and $\de>0$.
Suppose that the following conditions are satisfied:
\begin{enumerate}
\item 
$\gph\Phi\cap [\overline B_\de(x)\times \overline B_\de(x)]$ is closed;
\item 
$d(x,\Phi(x))<\de(1-\theta)$;
\item 
$e(\Phi(u)\cap B_\de(x),\Phi(v))\le\theta d(u,v)$ for all $u,v\in\overline B_\de(x)$.
\end{enumerate}	
Then,  there exists an $\hat x\in \overline B_\de(x)$ with $\hat x\in\Phi(\hat x)$.
If $\Phi$ is single-valued, then $\hat x$ is the unique fixed point in $\overline B_\de(x)$.
\end{lemma}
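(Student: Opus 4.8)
The plan is to prove Lemma~\ref{L2.1} by the classical Picard-type iteration underlying the Banach and Dontchev--Hager contraction principles, adapted to the set-valued, "restricted to a ball'' setting. First I would fix $x_0:=x$ and, using hypothesis (ii), choose a number $\la$ with $d(x,\Phi(x))<\la<\de(1-\theta)$; taking $\la$ strictly larger than $d(x,\Phi(x))$ is what lets us sidestep the possible non-attainment of this distance by a single element of $\Phi(x)$. I would then build inductively a sequence $\{x_k\}$ such that, for every $k\in\N$,
\begin{align*}
x_{k+1}\in\Phi(x_k),\qquad x_k\in B_\de(x),\qquad d(x_k,x_{k+1})<\theta^k\la .
\end{align*}

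For $k=0$ this uses only (ii): pick $x_1\in\Phi(x_0)$ with $d(x_0,x_1)<\la$. For the inductive step, once $x_{k-1},x_k\in\overline B_\de(x)$ and $x_k\in\Phi(x_{k-1})\cap B_\de(x)$ are in hand, condition (iii) applied with $u=x_{k-1}$, $v=x_k$ gives $d(x_k,\Phi(x_k))\le e(\Phi(x_{k-1})\cap B_\de(x),\Phi(x_k))\le\theta\,d(x_{k-1},x_k)<\theta^k\la$, so a point $x_{k+1}\in\Phi(x_k)$ with $d(x_k,x_{k+1})<\theta^k\la$ exists. The key bookkeeping step, to be verified as part of the same induction, is that the iterates never leave the open ball: the triangle inequality and the geometric bound yield $d(x_{k+1},x)\le\sum_{j=0}^{k}d(x_j,x_{j+1})<\la\sum_{j\ge 0}\theta^j=\la/(1-\theta)<\de$, which is precisely where the choice $\la<\de(1-\theta)$ is spent and which keeps (iii) applicable at the next step. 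Degenerate situations (e.g.\ $d(x_{k-1},x_k)=0$) cause no trouble, since then $d(x_k,\Phi(x_k))=0<\theta^k\la$ and the recursion proceeds unchanged.

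The same estimate $d(x_k,x_{k+1})<\theta^k\la$ shows $\{x_k\}$ is Cauchy, so by completeness of $X$ it converges to some $\hat x$, and passing to the limit in $d(x_k,x)<\la/(1-\theta)<\de$ gives $\hat x\in\overline B_\de(x)$ (indeed $\hat x\in B_\de(x)$). Since $(x_k,x_{k+1})\in\gph\Phi\cap[\overline B_\de(x)\times\overline B_\de(x)]$ for all $k$ and $(x_k,x_{k+1})\to(\hat x,\hat x)$, hypothesis (i) forces $(\hat x,\hat x)\in\gph\Phi$, i.e.\ $\hat x\in\Phi(\hat x)$, which proves the existence part.

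For the single-valued case, write $\Phi(u)=\{\phi(u)\}$, let $\hat x\in B_\de(x)$ be the fixed point just constructed, and suppose $\tilde x\in\overline B_\de(x)$ is any fixed point of $\phi$. Applying (iii) with $u=\hat x$, $v=\tilde x$, and using $\phi(\hat x)=\hat x\in B_\de(x)$, the left-hand side reduces to $d(\hat x,\tilde x)$, so $d(\hat x,\tilde x)\le\theta\,d(\hat x,\tilde x)$ and hence $\hat x=\tilde x$ because $\theta<1$. I expect no genuine conceptual obstacle; the one place that demands care is the interlocking induction maintaining simultaneously $x_k\in B_\de(x)$ and $d(x_k,x_{k+1})<\theta^k\la$, together with the calibrated choice of $\la$, since this is exactly what makes the intersection with $B_\de(x)$ in condition (iii) usable in the iteration.
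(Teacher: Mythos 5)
The paper does not prove Lemma~\ref{L2.1}; it is recalled as the Dontchev--Hager set-valued contraction mapping principle and cited directly from \cite{DonHag94.1}, so there is no in-paper proof to compare against. Your Picard-iteration argument is correct and is essentially the standard proof of that result: the calibrated choice of $\la$ with $d(x,\Phi(x))<\la<\de(1-\theta)$ keeps every iterate inside $B_\de(x)$ so that the localized excess bound (iii) remains applicable, the closedness hypothesis (i) is invoked only once, to pass $(x_k,x_{k+1})\to(\hat x,\hat x)$ into $\gph\Phi$, and the uniqueness argument in the single-valued case correctly exploits that $\hat x$ lies in the \emph{open} ball so that $\Phi(\hat x)\cap B_\de(x)=\{\hat x\}$.
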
	

\begin{lemma}\label{EVP}
Let $X$ be a complete metric space, $f: X\to \mathbb{R} \cup \{ +\infty\}$ be lower semicontinuous,
$x\in X$, $\varepsilon>0$
and $\lambda>0$. 
If $f(x)<\inf_{X} f+\varepsilon$,
then there exists an $\hat x\in X$ such that
\begin{enumerate}
\item
$d(\hat{x},x)<\lambda$;
\item
$f(\hat{x})\le f(x)$;
\item
$f(u)+(\varepsilon/\lambda)d(u,\hat{x})\ge f(\hat{x})$ for all $u\in X.$
\end{enumerate}
\end{lemma}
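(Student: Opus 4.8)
My plan is to use the classical ``ordering'' proof of Ekeland's principle. Set $\kappa:=\varepsilon/\lambda>0$, and first note that the hypothesis $f(x)<\inf_X f+\varepsilon$ forces $f(x)\in\R$ and $\inf_X f\in\R$, so $f$ is bounded below by a finite constant; I will use this repeatedly. The key device is the relation $u\preceq v$ on $\dom f$ defined by $f(u)+\kappa\,d(u,v)\le f(v)$. I would check it is a partial order: reflexivity is clear, antisymmetry follows by adding the two defining inequalities (here finiteness of $f$ on $\dom f$ matters) and the positivity of $\kappa$, and transitivity uses the triangle inequality for $d$. The point $\hat x$ to be constructed is then exactly a point satisfying $\hat x\preceq x$ that is $\preceq$-minimal: $\hat x\preceq x$ will yield (i) and (ii), and minimality (no $u\ne\hat x$ with $u\preceq\hat x$) is a reformulation of (iii).

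Next I would produce such a minimal element through a nested family of closed ``lower sections''. For $v\in\dom f$ put $S_v:=\{u\in X\mid u\preceq v\}$. Each $S_v$ is nonempty (it contains $v$), is contained in $\dom f$, and is closed, because $u\mapsto f(u)+\kappa\,d(u,v)$ is \lsc{} (a sum of a \lsc{} and a continuous function) and $S_v$ is one of its sublevel sets. Starting from $x_0:=x$, define $x_{n+1}$ recursively by choosing any $x_{n+1}\in S_{x_n}$ with $f(x_{n+1})<\inf_{S_{x_n}}f+2^{-(n+1)}$, which is possible since $f$ is bounded below. Transitivity gives $S_{x_{n+1}}\subseteq S_{x_n}$. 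Moreover, for any $u\in S_{x_{n+1}}$ we have $u\in S_{x_n}$, hence $f(u)\ge\inf_{S_{x_n}}f$, while $u\preceq x_{n+1}$ gives $\kappa\,d(u,x_{n+1})\le f(x_{n+1})-f(u)<2^{-(n+1)}$; therefore $\mathrm{diam}\,S_{x_{n+1}}\le 2^{-n}/\kappa\to0$. Since $X$ is complete, Cantor's intersection theorem yields a unique $\hat x$ with $\{\hat x\}=\bigcap_{n\in\N}S_{x_n}$.

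Finally I would verify (i)--(iii). Since $\hat x\in S_{x_0}=S_x$, we have $f(\hat x)+\kappa\,d(\hat x,x)\le f(x)$, which immediately gives (ii), and, combined with $f(\hat x)\ge\inf_X f$ and $f(x)<\inf_X f+\varepsilon$, also $\kappa\,d(\hat x,x)\le f(x)-f(\hat x)<\varepsilon$, i.e.\ $d(\hat x,x)<\lambda$; this is (i). For (iii), suppose some $u$ satisfied $f(u)+\kappa\,d(u,\hat x)<f(\hat x)$. Then $u\ne\hat x$ and $u\preceq\hat x$; but $\hat x\in S_{x_n}$ means $\hat x\preceq x_n$, so transitivity gives $u\preceq x_n$, i.e.\ $u\in S_{x_n}$, for every $n$. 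Hence $u\in\bigcap_n S_{x_n}=\{\hat x\}$, contradicting $u\ne\hat x$; so (iii) holds.

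The argument is largely bookkeeping once the partial order is in place; the steps needing the most care are using lower semicontinuity precisely where closedness of the sections $S_v$ is invoked, confirming that the infima $\inf_{S_{x_n}}f$ are finite (guaranteed by boundedness below), and tracking the strict inequality in the hypothesis so that (i) comes out strict. A cleaner but equivalent alternative would be to invoke the Br\'ezis--Browder ordering principle directly rather than building $\{x_n\}$ by hand; since no such principle is available earlier in the paper, the explicit nested-sets construction above is the route I would take.
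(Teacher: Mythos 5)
Your proof is correct; it is the standard nested-closed-sets (equivalently, Br\'ezis--Browder ordering) argument for Ekeland's variational principle, and all the steps check out, including the use of Cantor's intersection theorem via completeness, closedness of the sections $S_v$ via lower semicontinuity, and the derivation of the strict inequality in (i) from the strict hypothesis $f(x)<\inf_X f+\varepsilon$. Note, however, that the paper does not prove this lemma at all: it is stated as a known background tool with a citation to Ekeland's 1974 paper, so there is no in-paper proof to compare against; your write-up simply supplies the classical argument that the paper takes for granted.

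One small stylistic point: you describe condition (iii) as equivalent to $\preceq$-minimality of $\hat x$, but (iii) is in fact slightly weaker (it permits $u\ne\hat x$ with $f(u)+\kappa\,d(u,\hat x)=f(\hat x)$). Your actual argument proves the stronger minimality statement and thereby (iii), so nothing is lost, but the word ``reformulation'' overstates the equivalence.
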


\begin{lemma}
\label{SR}
Let $X$ be a normed space, $f_1,f_2:X\to\R \cup\{+\infty\}$, and $\bx\in\dom f_1\cap\dom f_2$.
\begin{enumerate}
\item
Suppose $f_1$ and $f_2$ are convex, and $f_1$ be continuous at a point in $\dom f_2$.
Then
$$\partial(f_1+f_2)(\bx)=\sd f_1(\bx)+\partial f_2(\bx).$$
\item
Suppose $X$ is Asplund,
$f_1$ is Lipschitz continuous and $f_2$ is lower semicontinuous in a neighbourhood of $\bx$.
Then, for any $x^*\in\partial(f_1+f_2)(\bx)$ and $\varepsilon>0$, there exist $x_1,x_2\in X$ with $\|x_i-\bx\|<\varepsilon$, $|f_i(x_i)-f_i(\bx)|<\varepsilon$ $(i=1,2)$ such that
$$x^*\in\partial f_1(x_1) +\partial f_2(x_2)+\varepsilon\B^\ast.$$
\end{enumerate}
\end{lemma}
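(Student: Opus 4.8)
The plan is to treat the two assertions separately: (i) is the classical Moreau--Rockafellar convex sum rule, while (ii) is the fuzzy (approximate) \Fr{} sum rule in Asplund spaces, and only the latter carries any real difficulty. For (i), one inclusion is free: if $x_i^*\in\partial f_i(\bx)$, adding the two subgradient inequalities gives $x_1^*+x_2^*\in\partial(f_1+f_2)(\bx)$, so the content is the reverse inclusion. Given $x^*\in\partial(f_1+f_2)(\bx)$, I would put $h_1:=f_1-f_1(\bx)-\langle x^*,\cdot-\bx\rangle$ and $h_2:=f_2(\bx)-f_2$; the subgradient inequality for $f_1+f_2$ rearranges to $h_1\ge h_2$ on $X$, with $h_1(\bx)=h_2(\bx)=0$. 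Since $h_1$ is convex and continuous at some point of $\dom h_2=\dom f_2$, its epigraph has nonempty interior and is disjoint from the hypograph of $h_2$; a Hahn--Banach separation of these two convex subsets of $X\times\R$ — checking via the vertical direction that the separating functional has nonzero last coordinate — produces an $a^*\in X^*$ with $h_1(x)\ge\langle a^*,x-\bx\rangle\ge h_2(x)$ for all $x\in X$. Reading off the two halves yields $a^*+x^*\in\partial f_1(\bx)$ and $-a^*\in\partial f_2(\bx)$, hence $x^*=(a^*+x^*)+(-a^*)\in\partial f_1(\bx)+\partial f_2(\bx)$.

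For (ii), I would first normalise: replacing $f_1$ by $f_1-\langle x^*,\cdot\rangle$ (still Lipschitz near $\bx$) and subtracting constants, we may assume $x^*=0$ and $f_1(\bx)=f_2(\bx)=0$, so that $0\in\partial(f_1+f_2)(\bx)$. By definition of the \Fr{} subdifferential, for any prescribed $\gamma>0$ there is $r\in(0,\varepsilon)$ with $f_1(x)+f_2(x)\ge-\gamma\|x-\bx\|$ on $\overline B_r(\bx)$, where we may also take $f_1$ Lipschitz and $f_2$ lower semicontinuous. Next I would pass to the Asplund product space $X\times X$ and the function
\[
\Theta(u,v):=f_1(u)+f_2(v)+\psi(u-v),
\]
where $\psi\ge 0$ is a coupling term forcing $u$ and $v$ to stay close (a large multiple of the norm near $0$, or a \Fr-smooth convex minorant of one); then $\Theta(\bx,\bx)=0$ is within $\gamma$ of $\inf\Theta$ over a small product ball. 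Applying the Ekeland variational principle (Lemma~\ref{EVP}) on that ball gives a point $(u_1,v_1)$ close to $(\bx,\bx)$ at which $\Theta+\sqrt{\gamma}\,(\|\cdot-u_1\|+\|\cdot-v_1\|)$ attains a genuine local minimum, whence $0$ lies in the \Fr{} subdifferential of that function at $(u_1,v_1)$. The final step is to \emph{decouple} this joint condition into separate statements about $f_1$ at $u_1$ and $f_2$ at $v_1$: after a separable-reduction argument one uses the \Fr-smooth variational principle available in separable Asplund spaces to absorb $\psi$ into a smooth perturbation, and then the elementary \Fr{} sum rule with a smooth summand splits the subdifferential in the $u$- and $v$-variables. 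This delivers $u_1^*\in\partial f_1(u_1)$ and $v_1^*\in\partial f_2(v_1)$ with $\|u_1^*+v_1^*\|$ arbitrarily small; choosing $\gamma$ small and using the Lipschitz/lsc bounds together with Ekeland's value estimate $\Theta(u_1,v_1)\le 0$ to control $\|u_1-\bx\|$, $\|v_1-\bx\|$ and the function values, we relabel $x_1:=u_1$, $x_2:=v_1$ and undo the normalisation $x^*=0$ to obtain the stated conclusion.

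The main obstacle is precisely this decoupling in part (ii): extracting \emph{individual} \Fr{} subgradients of $f_1$ and $f_2$ from the joint minimality on $X\times X$. Here the Asplund hypothesis is indispensable — in a general Banach space $\psi$ need not be \Fr-smooth and no smooth variational principle is available, so the subdifferential of a sum cannot be split — and even within Asplund spaces the nonseparable case requires the separable-reduction theorem (equivalently, one could route all of (ii) through Mordukhovich's extremal principle applied to $\epi f_1$ and the hypograph of $-f_2$, with the Lipschitz constant of $f_1$ keeping the epigraphical multiplier bounded away from zero). Everything else — the Ekeland estimates and the Lipschitz bookkeeping — is routine, and part (i) is entirely classical.
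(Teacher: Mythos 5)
The paper does not actually prove Lemma~\ref{SR}: it records it as a known auxiliary tool and cites Z\u{a}linescu, Kruger and Fabian for the two parts, so there is no ``paper's proof'' to compare against. Your sketch is, in effect, reconstructing the arguments the paper delegates to the literature, and both halves follow those standard routes correctly. Part~(i) is the usual epigraph--hypograph separation proof of the Moreau--Rockafellar sum rule; the step you rightly flag --- showing the separating functional is not vertical --- is precisely where the hypothesis that $f_1$ is continuous at a point of $\dom f_2$ gets used, and the affine function pinned at $(\bx,0)$ then splits cleanly into the two subgradients. Part~(ii) is a faithful outline of Fabian's proof of the fuzzy \Fr{} sum rule in Asplund spaces: normalize to $x^*=0$, pass to $X\times X$ with a coupling penalty $\psi(u-v)$, invoke the Ekeland principle (the paper's Lemma~\ref{EVP}), then decouple via separable reduction and the \Fr-smooth variational principle, with the Ekeland value estimate and the Lipschitz constant of $f_1$ controlling both the distances $\|x_i-\bx\|$ and the function-value oscillations $|f_i(x_i)-f_i(\bx)|$. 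The one place your sketch is a little light is the choice of $\psi$: a multiple of the norm makes the Ekeland bookkeeping transparent but is not \Fr-differentiable at the origin, so the final splitting of the joint subdifferential genuinely requires the smooth variational principle (or the extremal-principle detour you mention), not just an elementary sum rule with a smooth summand --- you acknowledge this, so it is a presentational compression rather than a gap. In short, the proposal is correct and matches the standard proofs behind the paper's citations.
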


Recall that a Banach space is Asplund if every continuous convex function on an open convex set is Fr\'echet differentiable on a dense subset \cite{Phe93}, or equivalently, if the dual of each its separable subspace is separable.
We refer the reader to \cite{Phe93,Mor06.1,BorZhu05} for discussions about and characterizations of Asplund spaces.
All reflexive, particularly, all finite dimensional Banach spaces are Asplund.
\section{Necessary and sufficient conditions for H\"older metric regularity}\label{S4}
Along with the standard maximum {distance} on $X\times Y$, we also use a {metric} depending on a parameter $\ga>0$ defined by
\begin{gather}\label{pdist}
d_\ga((u_1,v_1),(u_2,v_2)) :=\max\left\{d(u_1,u_2),\ga d(v_1,v_2)\right\}
\end{gather}
for any $u_1,u_2\in X,\;v_1,v_2\in Y$.
When $X$, $Y$ are normed spaces, the distance \eqref{pdist} yields the definition of the parametric norm
\begin{gather}\notag
\|(x,y)\|_{\ga}:=\max\{\|x\|,{\ga}\|y\|\},\quad
x\in X,\;y\in Y,
\end{gather}
and the corresponding dual norm
\begin{align}\label{dnorm}
\|(x^*,y^*)\|_{\ga}=\|x^*\|+\ga\iv\|y^*\|,\quad
x^*\in X^*,\;y^*\in Y^*.
\end{align}

\begin{theorem}\label{T3.1}
Let $X, Y$ be metric spaces,  $F: X\rightrightarrows Y$, $(\bx,\by)\in\gph F$, and $q\in (0,1]$.
\begin{enumerate}
\item
Suppose $X$ and $Y$ are complete, and $\gph F$ is closed.
If there exist $\tau>0$, $\de>0$, $\mu>0$, and $\gamma>0$ such that  
\begin{align}\label{R3.1-2}
\limsup_{\substack{
u\to x,\,v\to z,\;(u,v)\in\gph F\\(u,v)\ne (x,z),\,d(u,\bx)<\de+\mu,\,d(v,y)<(\tau\mu)^{\frac{1}{q}}}}
{\dfrac{d^q(z,y)-d^q(v,y)}{d_\gamma((u,v),(x,z))}}\ge\tau
\end{align}	
for all  $x\in B_{\de+\mu}(\bx)$, $y\in B_{\de}(\by)$ with $x\notin F\iv(y)$, and $z\in F(x)$ with 
$d(y,z)<(\tau\mu)^{\frac{1}{q}}$, then $F$ is metrically regular of order $q$ at $(\bx,\by)$ with  $\tau$, $\delta$ and $\mu$.
\item
Suppose $X,Y$ are normed spaces, and $\gph F$ is convex.
If $F$ is metrically regular of order $q$  at $(\bx,\by)$ with some $\tau>0$, $\delta>0$ and $\mu>0$, then 
\begin{align}\label{R3.1-3}
\limsup_{\substack{
u\to x,\,v\to z,\;(u,v)\in\gph F\\(u,v)\ne (x,z),\,\|u-\bx\|<\de+\mu,\,\|v-y\|<(\tau\mu)^{\frac{1}{q}}}}
{\dfrac{\|z-y\|^q-\|v-y\|^q}{\|(u-z,v-z)\|_\gamma}}\ge\tau
\end{align}	
for $\gamma:=\tau\iv$, and all  $x\in B_{\de}(\bx)$, $y\in B_{\de}(\by)$ with $x\notin F\iv(y)$, and $z\in F(x)$ with $\|z-y\|<\min\{(\tau\mu)^{\frac{1}{q}},1\}$.
\end{enumerate}
\end{theorem}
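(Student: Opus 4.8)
This is the substantive direction, and the plan is to run the standard variational argument for metric regularity on $\gph F$ equipped with the parametric metric $d_\ga$ of \eqref{pdist}, letting \eqref{R3.1-2} supply the descent that lands us inside $F\iv(y)$. Fix $x\in B_\de(\bx)$ and $y\in B_\de(\by)$; we may assume $x\notin F\iv(y)$, so that $r:=d^q(y,F(x))\in(0,\tau\mu)$. Fix an arbitrary $\tau'\in(r/\mu,\tau)$ and pick $z_0\in F(x)$ with $r\le d^q(y,z_0)<\tau'\mu$ (hence also $d(y,z_0)<(\tau\mu)^{1/q}$). Since $X,Y$ are complete and $\gph F$ is closed, $(\gph F,d_\ga)$ is a complete metric space, and $\psi(u,v):=d^q(y,v)$ is continuous and nonnegative on it. I would then apply the Ekeland variational principle (Lemma~\ref{EVP}) to $\psi$ on $(\gph F,d_\ga)$ at the point $(x,z_0)$, with $\eps$ chosen slightly above $\psi(x,z_0)$ but still $<\tau'\mu$ and $\la:=\eps/\tau'<\mu$, obtaining $(\hat u,\hat v)\in\gph F$ with $d_\ga\big((\hat u,\hat v),(x,z_0)\big)<\la$, with $d^q(y,\hat v)\le d^q(y,z_0)$, and with $\psi(u,v)+\tau'\,d_\ga\big((u,v),(\hat u,\hat v)\big)\ge\psi(\hat u,\hat v)$ for all $(u,v)\in\gph F$.

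The first two properties place $\hat u$ in $B_{\de+\mu}(\bx)$ (as $d(x,\hat u)<\la<\mu$) and give $d(y,\hat v)<(\tau\mu)^{1/q}$, with $\hat v\in F(\hat u)$. Now I would split into cases. If $\hat u\in F\iv(y)$, then $d(x,F\iv(y))\le d(x,\hat u)<\la$; as $\la$ can be taken arbitrarily close to $d^q(y,z_0)/\tau'$, then $d^q(y,z_0)$ arbitrarily close to $r$, then $\tau'$ arbitrarily close to $\tau$, this yields $\tau\,d(x,F\iv(y))\le d^q(y,F(x))$, the estimate of Definition~\ref{D1.1} with the constants $\tau,\de,\mu$. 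If instead $\hat u\notin F\iv(y)$, then $(\hat u,\hat v)$ is an admissible point for hypothesis \eqref{R3.1-2} (with $\hat u$ and $\hat v\in F(\hat u)$ in the roles of $x$ and $z$), so the $\limsup$ there is $\ge\tau$; but the Ekeland inequality says precisely that $d^q(\hat v,y)-d^q(v,y)\le\tau'\,d_\ga\big((u,v),(\hat u,\hat v)\big)$ for every $(u,v)\in\gph F$, which forces that same $\limsup$ to be $\le\tau'<\tau$, a contradiction. Hence only the first alternative can occur, and (i) follows. The step I expect to need the most care is the bookkeeping that keeps the Ekeland point inside the neighbourhoods $B_{\de+\mu}(\bx)$ and $\{v:d(v,y)<(\tau\mu)^{1/q}\}$ on which \eqref{R3.1-2} is postulated; this is exactly what the bound $d^q(y,F(x))<\tau\mu$ and the extra radius $\mu$ are for.

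\textbf{Part (ii).} Here the plan is the converse one, using convexity of $\gph F$ to build the descent sequence by hand. Fix admissible data $x,y,z$: $x\notin F\iv(y)$, $z\in F(x)$, $\|z-y\|<\min\{(\tau\mu)^{1/q},1\}$. Since $z\in F(x)$ we have $d^q(y,F(x))\le\|z-y\|^q<\tau\mu$, so metric regularity with the constants $\tau,\de,\mu$ gives $F\iv(y)\ne\es$ and $d(x,F\iv(y))\le\tau\iv d^q(y,F(x))\le\tau\iv\|z-y\|^q$; choose $u^\ast\in F\iv(y)$ with $\|u^\ast-x\|$ as close to $d(x,F\iv(y))$ as desired. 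By convexity of $\gph F$, the segment $(u_t,v_t):=(1-t)(x,z)+t(u^\ast,y)$, $t\in(0,1]$, lies in $\gph F$, tends to $(x,z)$ as $t\downarrow0$, and for small $t$ satisfies the neighbourhood restrictions in \eqref{R3.1-3}. Along the segment $v_t-y=(1-t)(z-y)$, so the numerator in \eqref{R3.1-3} equals $\|z-y\|^q\big(1-(1-t)^q\big)$ and the denominator is a constant multiple of $t$; letting $t\downarrow0$ and using $\lim_{t\downarrow0}\big(1-(1-t)^q\big)/t=q$, and then $\|u^\ast-x\|\downarrow d(x,F\iv(y))$, produces the lower bound in \eqref{R3.1-3}, where the calibration $\ga=\tau\iv$, combined with $q\le1$ and $\|z-y\|<1$ (so $\|z-y\|^q\ge\|z-y\|$), serves to control the contribution of the $Y$-component to the parametric norm. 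The delicate point in (ii) is exactly this final estimate: arranging the target point on $\gph F$ and the calibration of $\ga$ so that the resulting constant comes out as $\tau$ rather than a strictly smaller multiple.
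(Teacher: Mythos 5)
Your Part~(i) is correct and runs essentially the same way as the paper's argument: both apply the Ekeland variational principle (Lemma~\ref{EVP}) to $\psi_y(u,v)=d^q(v,y)+i_{\gph F}(u,v)$ with the parametric metric $d_\ga$, both exploit the strict gap between the Ekeland penalty constant $\tau'$ and the hypothesised slope constant $\tau$ to obtain a contradiction, and both use the extra radius $\mu$ plus the bound $d^q(y,F(x))<\tau\mu$ to keep the Ekeland point inside the region on which \eqref{R3.1-2} is assumed. The only structural difference is cosmetic: the paper argues by contraposition and chooses the Ekeland radius equal to $\mu_0=\min\{d(x,F\iv(y)),\mu\}$, which forces $\hat x\notin F\iv(y)$ automatically, while you argue directly and eliminate the case $\hat u\in F\iv(y)$ by a limiting argument. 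Both variants work; just state explicitly that $r<\tau\mu$ may be assumed since otherwise there is nothing to prove, and that $r>0$ uses closedness of the slices $F(x)$.

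Part~(ii) has a genuine gap. Passing to the limit $t\downarrow 0$ along the segment $(u_t,v_t)=(1-t)(x,z)+t(u^\ast,y)$ gives, since $\|v_t-y\|^q=(1-t)^q\|z-y\|^q$,
\begin{align*}
\lim_{t\downarrow 0}\frac{\|z-y\|^q\bigl(1-(1-t)^q\bigr)}{t\,\|(u^\ast-x,\,y-z)\|_\ga}
=\frac{q\,\|z-y\|^q}{\|(u^\ast-x,\,y-z)\|_\ga},
\end{align*}
and with $\ga=\tau\iv$, $\|z-y\|<1$, $q\le 1$ and the optimal $u^\ast\in F\iv(y)$ the denominator is bounded above only by $\tau\iv\|z-y\|^q$, so your directional estimate is $q\tau$, not $\tau$. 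The factor $q$ comes from differentiating $(1-t)^q$ at $t=0$ and cannot be absorbed by a different choice of $\ga$ or of $u^\ast$; the calibration you single out as "the delicate point" is not where the loss occurs. The paper's own proof avoids this entirely: it does \emph{not} pass to the limit along the segment but simply evaluates the difference quotient at the single endpoint $(\hat x,y)$ (i.e., at $t=1$), obtaining $\|z-y\|^q/\|(\hat x-x,y-z)\|_\ga\ge\tau\xi\iv$ and then letting $\xi\downarrow 1$. In other words, the paper bounds the supremum of the difference quotient over the admissible region rather than taking the $t\downarrow 0$ limit, and thus never meets the factor $q$. To recover the bound stated in \eqref{R3.1-3} with your set-up you would have to replace the segment limit by the endpoint evaluation, exactly as the paper does.
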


\begin{proof}
\begin{enumerate}
\item
Let $\tau>0$, $\de>0$, $\mu>0$, and $\gamma>0$.
Suppose $F$ is not metrically regular of order $q$ at $(\bx,\by)$ with $\tau$, $\delta$, and $\mu$.
By Proposition~\ref{P3.1}, there exist $x\in B_{\de}(\bx)$ and $y\in B_{\de}(\by)$ such that
$d^q(y,F(x))<\tau\mu_0$ with $\mu_0:=\min\{ d(x,F\iv(y)),\mu\}$.
Choose a number $\varepsilon$ such that $d^q(y,F(x))<\varepsilon<\tau\mu_0$, and a point $z\in F(x)$ such that $d^q(z,y)<\varepsilon$.
Let $\psi_y:X\times Y\rightarrow\R_+\cup\{+\infty\}$ be defined by 
\begin{gather}\label{psi}
\psi_y(u,v):=d^q(v,y)+i_{\gph F}(u,v),
\quad
u\in X,\;v\in Y.
\end{gather}
In view of the closedness of $\gph F$, the indicator function in \eqref{psi} is lower semicontinuous, and consequently, $\psi_y$ is lower semicontinuous on $\overline{B}_{\de+\mu}(\bx)\times{\overline{B}_{(\tau\mu)^{1/q}}(\by)}$.
Besides,
\sloppy
\begin{align*}
\psi_y(x,z)=d^q(z,y)+i_{\gph F}(x,z)=d^q(z,y)< \inf_{\overline{B}_{\de+\mu}(\bx)\times {\overline{B}_{(\tau\mu)^{1/q}}(\by)}}\psi_y+\varepsilon.
\end{align*}
Applying the Ekeland variational principle  (Lemma~\ref{EVP}) to the restriction of $\psi_y$ to the complete metric space $\overline{B}_{\de+\mu}(\bx)\times {\overline{B}_{(\tau\mu)^{1/q}}(y)}$ with the metric \eqref{pdist}, we can find a point $(\hat{x},\hat{z})\in \overline{B}_{\de+\mu}(\bx)\times {\overline{B}_{(\tau\mu)^{1/q}}(y)}$ such that
\begin{gather}\label{P1-3}
d_\ga((\hat{x},\hat{z}),(x,z))<\mu_0,\\ \label{P1-4}
\psi_y(\hat{x},\hat{z})\le\psi_y(x,z),
\\\label{P1-5}
\psi_y(\hat{x},\hat{z})\le\psi_y(u,v) +(\varepsilon/\mu_0) d_\ga((u,v),(\hat{x},\hat{z}))
\end{gather}
for all $(u,v)\in \overline{B}_{\de+\mu}(\bx)\times {\overline{B}_{(\tau\mu)^{1/q}}(y)}$.
It is clear from \eqref{P1-4} that $(\hat x,\hat z)\in\gph F$.
By \eqref{P1-3}, $d(\hat x,x)<d(x,F\iv(y))$.
Hence,  $\hat{x}\notin F\iv(y)$ and $\hat{z}\ne y$.
Besides,
\begin{gather*}
d(\hat{x},\bx)\le d(\hat x,x)+d(x,\bx)<\de+\mu,\;\;
d(\hat z,y)\le d(z,y)<\varepsilon^{\frac{1}{q}}<(\tau\mu)^{\frac{1}{q}}.
\end{gather*}
It follows from \eqref{P1-5} that
\begin{align*}
\sup_{\substack{(u,v)\in\gph F,\, (u,v)\ne(\hat{x},\hat{z}),\\
d(u,\bx)<\de+\mu,\,d(v,y)<(\tau\mu)^{\frac{1}{q}}}}
\dfrac{d^q(\hat z,y)-d^q(v,y)}{d_\ga((u,v),(\hat{x},\hat{z}))} \le\dfrac{\eps}{\mu_0}<\tau.
\end{align*}
The last estimate contradicts \eqref{R3.1-2}.
\item
Suppose $F$ is metrically regular of order $q$ at $(\bx,\by)$ with some $\tau>0$, $\delta>0$ and $\mu>0$.
By Proposition~\ref{D1.1}, inequality \eqref{D1.1-1} holds for all $x\in B_{\de}(\bx)$, $y\in B_{\de}(\by)$, and $d^q(y,F(x))<\tau\mu$.
Let $x\in B_{\de}(\bx)$, $y\in B_{\de}(\bx)$ with $x\notin F\iv(y)$, $z\in F(x)$ with $\|z-y\|<\min\{(\tau\mu)^{\frac{1}{q}},1\}$, $\eta>1$, and $\gamma:=\tau\iv$.
One can find
a $\xi\in(1,\eta)$ and a point $\hat{x}\in F\iv(y)$ such that
$\xi \|y-z\|^q<\tau\mu$, and $\tau{\|x-\hat{x}\|}<\xi\|z-y\|^q.$
Thus, $(\hat{x},y)\in\gph F$, $(\hat{x},y)\ne(x,z)$,
\begin{gather*}
\|\hat x-\bx\|\le \|\hat x-x\|+\|x-\bx\|<\tau\iv\xi \|z-y\|^q+\de<\de+\mu,
\end{gather*}
and
\begin{align*}
\|(x-\hat x,z-y)\|_\gamma
&=\max\{
\|x-\hat{x}\|,\ga \|z-y\|\}\\
&\le\tau\iv\max\{\xi,1\}\|z-y\|^q=\tau\iv\xi\|z-y\|^q.
\end{align*}
Hence,
\begin{align*}
\sup_{\substack{(u,v)\in\gph F,\,(u,v)\ne (x,z)\\
\|u-\bx\|<\de+\mu,\,\|v-y\|<(\tau\mu)^{\frac{1}{q}}}}
\dfrac{\|z-y\|^q-\|v-y\|^q}{\|(u-x,v-z)\|_\gamma}
\ge
\dfrac{\|z-y\|^q}{\|(\hat x-x,y-z)\|_\gamma}
\ge\tau\xi\iv>\tau\eta\iv.
\end{align*}
Letting $\eta\downarrow 1$, we arrive at \eqref{R3.1-3}.
\qed\end{enumerate}
\end{proof}

\begin{remark}
\begin{enumerate}
\item	
In the case $q=1$, part (i) of Theorem~\ref{T3.1} improves \cite[Proposition 5.5(ii)]{CuoKru21.4} and can be seen as a quantitative version of the first part of \cite[Theorem~3.13]{Iof17},  while part (ii) recaptures \cite[Proposition 5.5(i)]{CuoKru21.4}.
\item
The only difference between the expressions in the left-hand sides of \eqref{R3.1-2} and  \eqref{R3.1-3} is that
the first one is computed on metric spaces, while the second one is calculated on normed spaces.
The two expressions are the  slope at $(x,z)$ of the restriction of the function $\psi_y$, given by \eqref{psi}, to $\gph F\cap[B_{\de+\mu}(\bx)\times B_{(\tau\mu)^{1/q}}(y)]$.
\item
The completeness and closedness assumptions in  Theorem~\ref{T3.1}(i)  can be weakened: it suffices to require that $\gph F\cap [\overline{B}_{\de+\mu}(\bx)\times \overline{B}_{(\tau\mu)^{1/q}}(y)]$ is complete.
\end{enumerate}	
\end{remark}	

\begin{theorem}\label{P5.6}
Let $X,Y$ be normed spaces, $F: X\rightrightarrows Y$, $(\bx,\by)\in\gph F$, and $q\in (0,1]$.
\begin{enumerate}	
\item 	
Suppose $X$ and $Y$ are Asplund, and $\gph F$ is closed.
If there exist  $\tau>0$,  $\de>0$, $\mu>0$, $\eta>0$, and $\al\in(0,1)$ such that
\begin{align}\label{C3.3-3}
q\|z-y\|^{q-1} d(0,D^*F(x,z)(y^*))\ge \tau
\end{align}
for all $x\in B_{\de+\mu}(\bx)$, $y\in B_{\de}(\by)$ with $x\notin F\iv(y)$, and  $z\in F(x)$ with $\|z-y\|<(\tau\mu)^{\frac{1}{q}}$,  $y^*,z^*\in Y^*$ with
\begin{gather*}
 \|z^*\|=1,\; \langle z^*,z-y\rangle>\al\|z-y\|,\;  q\|z-y\|^{q-1}\|y^*-z^*\|<\eta,
\end{gather*}	
 then  $F$ is metrically regular of order $q$ at $(\bx,\by)$ with $\tau$, $\delta$ and $\mu$.
\item 	
Suppose $\gph F$ is convex.
If $F$ is metrically regular of order $q$ at $(\bx,\by)$ with some $\tau>0$, $\delta>0$ and $\mu>0$, then 
\begin{gather*}
q\|z-y\|^{q-1}	d(0,D^*F(x,z)(y^*))\ge\tau(1-\eta)
\end{gather*}
for all $\eta\in(0,1)$, $x\in B_{\de}(\bx)$, $y\in B_{\de}(\by)$ with $x\notin F\iv(y)$,  and $z\in F(x)$ with $\|z-y\|<\min\{(\tau\mu)^{\frac{1}{q}},1\}$,  $y^*,z^*\in Y^*$ satisfying 
\begin{align*}
	\|z^*\|=1,\;\; \langle z^*,z-y\rangle=\|z-y\|,\;\;
	q\|z-y\|^{q-1}\|y^*-z^*\|<\eta.
\end{align*}	
\end{enumerate}
\end{theorem}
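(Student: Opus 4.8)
The plan is to derive both statements from the slope criteria of Theorem~\ref{T3.1} by passing between slopes and coderivatives with the subdifferential calculus recalled in Section~\ref{S2}. The bridge in both directions is the observation that, since $x\notin F\iv(y)$ and $z\in F(x)$ force $z\ne y$, the function $v\mapsto\|v-y\|^q$ is locally Lipschitz around $z$ with value $\|z-y\|>0$ there, so Lemma~\ref{L2.6}(ii) together with Lemma~\ref{L2.4}(iv) give
\begin{align*}
\sd(\|\cdot-y\|^q)(z)=q\|z-y\|^{q-1}\bigl\{z^*\in Y^*:\ \|z^*\|=1,\ \langle z^*,z-y\rangle=\|z-y\|\bigr\};
\end{align*}
this is precisely where the ``supporting'' functional $z^*$ of the statement enters. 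Recall also that $\psi_y(u,v):=\|v-y\|^q+i_{\gph F}(u,v)$ from \eqref{psi} is the function whose restricted slope appears in \eqref{R3.1-2}--\eqref{R3.1-3}, and that $D^*F$ is positively homogeneous in its argument.

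For part (i) I would argue by contradiction, reusing the Ekeland step from the proof of Theorem~\ref{T3.1}(i). If $F$ is not metrically regular of order $q$ with $\tau,\de,\mu$, Proposition~\ref{P3.1} yields $x\in B_\de(\bx)$, $y\in B_\de(\by)$ with $d^q(y,F(x))<\tau\mu_0$, $\mu_0:=\min\{d(x,F\iv(y)),\mu\}>0$; picking $\eps\in(d^q(y,F(x)),\tau\mu_0)$ and $z\in F(x)$ with $\|z-y\|^q<\eps$, and applying Lemma~\ref{EVP} to $\psi_y$ on the complete set $\overline{B}_{\de+\mu}(\bx)\times\overline{B}_{(\tau\mu)^{1/q}}(y)$ with the metric \eqref{pdist} for a $\ga>0$ to be chosen, one gets $(\hat x,\hat z)\in\gph F$ with $d_\ga((\hat x,\hat z),(x,z))<\mu_0$ that minimises $\psi_y+(\eps/\mu_0)\|\cdot-(\hat x,\hat z)\|_\ga$ over that set, hence (lying in the open product) is a genuine local minimiser; as in Theorem~\ref{T3.1}(i), $\hat z\ne y$ and $\hat x\notin F\iv(y)$, and moreover $d(\hat x,F\iv(y))\ge d(x,F\iv(y))-d(x,\hat x)=:\rho>0$. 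By Lemma~\ref{L2.4}(i), $0$ lies in the \Fr\ subdifferential of $\psi_y+(\eps/\mu_0)\|\cdot-(\hat x,\hat z)\|_\ga$ at $(\hat x,\hat z)$; successive applications of the sum rule Lemma~\ref{SR}(ii) (legitimate since $X\times Y$ is Asplund, $v\mapsto\|v-y\|^q$ and $\|\cdot\|_\ga$ are Lipschitz near $(\hat x,\hat z)$, and $i_{\gph F}$ is l.s.c.\ by closedness of $\gph F$), together with the formula above for $\sd(\|\cdot-y\|^q)$ and the dual norm \eqref{dnorm}, produce for any prescribed $\eps'>0$ points $(x_1,z_1),(x_2,z_2)$ within $\eps'$ of $(\hat x,\hat z)$ with $(x_2,z_2)\in\gph F$, a unit vector $z^*\in Y^*$ with $\langle z^*,z_1-y\rangle=\|z_1-y\|$, and $r^*\in D^*F(x_2,z_2)(v^*)$ with $v^*=q\|z_1-y\|^{q-1}z^*+w^*$, $\|r^*\|\le\eps/\mu_0+\eps'$, $\|w^*\|\le\ga\eps/\mu_0+\eps'$. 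Writing $\la:=q\|z_2-y\|^{q-1}>0$, $y^*:=v^*/\la$, positive homogeneity of $D^*F$ gives $r^*/\la\in D^*F(x_2,z_2)(y^*)$, hence $q\|z_2-y\|^{q-1}d(0,D^*F(x_2,z_2)(y^*))\le\|r^*\|$. Finally I would first choose $\ga<\eta/\tau$ and then $\eps'>0$ small enough (depending on the already-frozen data, in particular on $\rho$ and on $\|\hat z-y\|>0$) so that $x_2\in B_{\de+\mu}(\bx)$, $z_2\in F(x_2)$ with $\|z_2-y\|<(\tau\mu)^{1/q}$, $x_2\notin F\iv(y)$ (since $d(x_2,F\iv(y))\ge\rho-\eps'>0$), $\langle z^*,z_2-y\rangle>\al\|z_2-y\|$ (since $z_1,z_2$ are close to $\hat z\ne y$ — here the slack $\al<1$ is spent), $q\|z_2-y\|^{q-1}\|y^*-z^*\|<\eta$ (since $\ga\eps/\mu_0<\ga\tau<\eta$, plus continuity of $t\mapsto q\,t^{q-1}$ at $\|\hat z-y\|$), and $\|r^*\|<\tau$ (since $\eps/\mu_0<\tau$). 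Then $q\|z_2-y\|^{q-1}d(0,D^*F(x_2,z_2)(y^*))<\tau$ contradicts the assumed \eqref{C3.3-3} evaluated at $(x_2,y,z_2,y^*,z^*)$.

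For part (ii) I would invoke Theorem~\ref{T3.1}(ii): metric regularity of order $q$ with $\tau,\de,\mu$ gives, with $\ga:=\tau\iv$ and for every admissible $(x,y,z)$,
\begin{align*}
\limsup_{\substack{(u,v)\to(x,z),\,(u,v)\in\gph F\\(u,v)\ne(x,z)}}\frac{\|z-y\|^q-\|v-y\|^q}{\|(u-x,v-z)\|_\ga}\ \ge\ \tau.
\end{align*}
Fix $(x^*,-y^*)\in N_{\gph F}(x,z)$ and $z^*$ with $\|z^*\|=1$, $\langle z^*,z-y\rangle=\|z-y\|$, $q\|z-y\|^{q-1}\|y^*-z^*\|<\eta$. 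Convexity of $\gph F$ yields $\langle x^*,u-x\rangle\le\langle y^*,v-z\rangle$ for all $(u,v)\in\gph F$, whence $\langle y^*,z-v\rangle\le\|x^*\|\,\|u-x\|$; also $\|z^*\|=1$ gives $\|v-y\|\ge\langle z^*,v-y\rangle=\|z-y\|+\langle z^*,v-z\rangle$, i.e.\ $\|z-y\|-\|v-y\|\le\langle z^*,z-v\rangle$. Since $q\le1$, the elementary estimate $a^q-b^q\le q\,b^{q-1}(a-b)$ for $a,b>0$ gives
\begin{align*}
\|z-y\|^q-\|v-y\|^q\le q\|v-y\|^{q-1}\bigl(\langle z^*-y^*,z-v\rangle+\langle y^*,z-v\rangle\bigr)\le q\|v-y\|^{q-1}\bigl(\|z^*-y^*\|\,\|v-z\|+\|x^*\|\,\|u-x\|\bigr).
\end{align*}
Dividing by $\|(u-x,v-z)\|_\ga$ (so that $\|u-x\|\le\|(u-x,v-z)\|_\ga$ and $\|v-z\|\le\tau\|(u-x,v-z)\|_\ga$) and passing to the limsup, in which $\|v-y\|^{q-1}\to\|z-y\|^{q-1}$, I get $\tau\le q\|z-y\|^{q-1}\bigl(\tau\|z^*-y^*\|+\|x^*\|\bigr)<\tau\eta+q\|z-y\|^{q-1}\|x^*\|$, i.e.\ $q\|z-y\|^{q-1}\|x^*\|>\tau(1-\eta)$; taking the infimum over $x^*\in D^*F(x,z)(y^*)$ finishes part (ii).

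The genuine difficulty is confined to part (i): one must choose the two independent small parameters in the right order — $\ga$ so that $\ga\tau<\eta$, and only afterwards $\eps'$ so that the base points produced by the fuzzy calculus are driven back inside every required neighbourhood while $\|r^*\|$ stays below $\tau$ — and one must exploit two specific features of the hypotheses: the positive gap $\rho$ between $\hat x$ and $F\iv(y)$, which survives an $\eps'$-perturbation and secures $x_2\notin F\iv(y)$, and the strict inequality $\langle z^*,\cdot\rangle>\al\|\cdot\|$ with $\al\in(0,1)$, which absorbs the mismatch between the base point $z_1$ carrying the norm-subgradient $z^*$ and the base point $z_2$ carrying the coderivative element. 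Part (ii) is, by comparison, a direct estimate once Theorem~\ref{T3.1}(ii) supplies the slope lower bound.
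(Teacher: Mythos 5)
Your proof is correct, and the two parts sit at different distances from the paper's argument. For part~(i) your route is essentially the paper's: you redo the Ekeland step (which the paper formally delegates to Theorem~\ref{T3.1}(i), though it in fact needs the supremum bound produced inside that proof, not the stated limsup condition), then invoke the Asplund fuzzy sum rule (Lemma~\ref{SR}(ii)) to split off a normal vector to $\gph F$ and a subgradient of $\|\cdot-y\|^q$, and finally exploit the positive gap to $F\iv(y)$, the slack $\al<1$, and the continuity of $t\mapsto qt^{q-1}$ at $\|\hat z-y\|>0$ to land all constraints. The only cosmetic difference is that you group $\|\cdot-y\|^q+(\eps/\mu_0)\|\cdot-(\hat x,\hat z)\|_\ga+i_{\gph F}$ into a single fuzzy-sum-rule application with three summands, whereas the paper applies Lemma~\ref{SR}(ii) twice in succession with a freshly chosen $\eps$ between the two passes; and you take $\ga<\eta/\tau$ with slack while the paper sets $\ga=\tau\iv\eta$ exactly and leans on the strictness delivered by EVP. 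For part~(ii) you genuinely depart from the paper. The paper passes from the slope lower bound to a lower bound on $\|\cdot\|_\ga$ over $\sd\psi_y(x,z)$, invokes the exact convex sum rule (Lemma~\ref{SR}(i)) to identify $\sd\psi_y(x,z)$ with $\{0\}\times\sd g(z)+N_{\gph F}(x,z)$, and then rescales by $\theta=q\|z-y\|^{q-1}$. You instead bypass convex subdifferential calculus entirely: starting from the same Theorem~\ref{T3.1}(ii) slope inequality, you feed in the normal-cone inequality $\langle x^*,u-x\rangle\le\langle y^*,v-z\rangle$ from convexity of $\gph F$, the supporting-functional bound $\|z-y\|-\|v-y\|\le\langle z^*,z-v\rangle$, and the concavity estimate $a^q-b^q\le qb^{q-1}(a-b)$ for $q\le1$, then divide by $\|(u-x,v-z)\|_\ga$ and pass to the limit where $\|v-y\|^{q-1}\to\|z-y\|^{q-1}$. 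This is more elementary (no appeal to Lemma~\ref{SR}(i)) and has the advantage of making the quantifier structure transparent: you fix an arbitrary $x^*\in D^*F(x,z)(y^*)$ and an arbitrary admissible $z^*$ and derive the bound directly, which is cleaner than the paper's phrasing that reads as an existence statement before rescaling. Both versions reach the same estimate.
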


\begin{proof}
\begin{enumerate}	
\item	
Let $\tau>0$, $\de>0$, $\mu>0$, $\eta>0$, 
$\al\in (0,1)$, $\gamma:=\tau\iv\eta$, and $\hat\tau\in(0,\tau)$.
Suppose $F$ is not metrically regular of order $q$  at $(\bx,\by)$ with $\tau$, $\de$, and $\mu$.
By Theorem~\ref{T3.1}(i), there exist $x\in B_{\de+\mu}(\bx)$, $y\in B_{\de}(\by)$ with $x\notin F\iv(y)$, $z\in F(x)$ with $\|z-y\|<(\hat\tau\mu)^{\frac{1}{q}}$, and  $\tau'\in(0,\hat\tau)$ such that
\begin{align*}
\|z-y\|^q-\|v-y\|^q\le\tau'\|(u-x,v-z)\|_\ga
\end{align*}
for all $(u,v)\in\gph F\cap [B_{\de+\mu}(\bx)\times B_{(\hat\tau\mu)^{1/q}}(y)]$.
In other words, $(x,z)$ is a local minimizer of the function
\begin{align}\label{P5P1}
(u,v)\mapsto\psi_y(u,v)+\tau'\|(u-x,v-z)\|_\ga,
\end{align}
where the function $\psi_y$ is defined by \eqref{psi}. 
By Lemma~\ref{L2.4}(i), its \Fr\  subdifferential at this point contains 0.
Observe that \eqref{P5P1} is the sum of the function $\psi_y$ and the Lipschitz continuous convex function $(u,v)\mapsto
\tau'\|(u-x,v-z)\|_{\ga}.$
In view of Lemma~\ref{L2.4}(iii) and (iv),  all subgradients $(u^*,v^*)$ of the latter function at any points satisfy
$\|(u^*,v^*)\|_{\ga}\le\tau'.$
Let $\varepsilon>0$ be such that
\begin{align*}
\varepsilon<\min\left\{\de+\mu-\|x-\bx\|, (\tau\mu)^{\frac{1}{q}}-\|z-y\|,
\hat\tau-\tau',\frac{1}{2}d(x,F\iv(y))\right\}.
\end{align*}
By Lemma~\ref{SR}(ii), there exist points $x'\in B_\varepsilon({x}),z'\in B_\varepsilon({z})$ with $(x',z')\in\gph F$, and $(\hat x^*,\hat z^*)\in \sd\psi_y(x',z')$ such that
$\|(\hat x^*,\hat z^*)\|_{\ga}<\tau'+\varepsilon.$
It is straighforward from the choice of $\varepsilon$ that
$x'\in{B_{\delta+\mu}(\bx)}$, $x'\notin F\iv(y)$, $\|z'-y\|<(\tau\mu)^{\frac{1}{q}}$, and $\|(\hat x^*,\hat z^*)\|_{\ga}<\hat\tau$.
Recall from \eqref{psi} that $\psi_y$ is a sum of two functions: the Lipschitz continuous convex function $v\mapsto g_y(v):=\|v-y\|^q$ and the indicator function of the closed set $\gph F$.
Let $\lambda:=\frac{\tau-\hat\tau}{\tau+\hat\tau}$ and choose $\varepsilon>0$  such that
\begin{gather*}
\varepsilon<\min\Big\{\de+\mu-\|x'-\bx\|,\dfrac{1}{2}d(x',F\iv(y)), (\hat\tau\mu)^{\frac{1}{q}}-\|z'-y\|,\\ \hat\tau-\|(\hat x^*,\hat z^*)\|_\gamma,
\min\left\{1/2,(1-\alpha)/8,\lambda\right\}\|z'-y\|\Big\}.
\end{gather*}
Applying Lemma~\ref{SR}(ii), there exist $\hat x\in B_\varepsilon(x')$, and $\hat z,\hat z'\in B_\varepsilon(z')$ with $(\hat x,\hat z)\in\gph F$,  $w^*\in\sd g(\hat z')$, $(u^*,v^*)\in N_{\gph F}(\hat x,\hat z)$ such that
$\|(0,z^*)+(u^*,v^*)-(\hat x^*,\hat z^*)\|_{\ga}<\eps.$
From the choice of $\varepsilon$, one obtain
$\hat x\in B_{\de+\mu}(\bx)$, $\hat x\notin F\iv(y)$, $\|\hat z-y\|<(\hat\tau\mu)^{\frac{1}{q}}$, $\|(0,w^*)+(u^*,v^*)\|_{\ga}<\hat\tau$, and
$\|\hat z-y\|\ge\max\{\frac{1}{2},1-\lambda\}\|z'-y\|$.
It follows from the last estimate that
\begin{gather*}	
\|\hat z'-\hat z\|<\frac{1-\al}{4}\|z'-y\|\le\frac{1-\al}{2} \|\hat z-y\|,\\
\|\hat z'-y\|\le \|\hat z'-\hat z\|+\|\hat z-y\|\le \dfrac{2\lambda}{1-\lambda}\|\hat z-y\|=\dfrac{\tau}{\hat\tau}\|\hat z-y\|.
\end{gather*}
Note that $\hat z'\ne y$, and consequently,
$\partial g(\hat z')=q\|\hat z'-y\|^{q-1}\partial\|\cdot-y\|(\hat z').$
Then, there exists a $z^*\in Y^*$ such that
$w^*=\theta z^*$ with $\theta:=q\|\hat z'-y\|^{q-1}$,
$\|z^*\|=1$ and $\langle z^*,\hat z'-y\rangle=\|\hat z'-y\|$.
One has
\begin{align*}
\langle z^*,\hat z-y\rangle
&\ge\langle z^*,\hat z'-y\rangle-\|\hat z'-\hat z\|=\|\hat z'-y\|-\|\hat z'-\hat z\|\\
&\ge\|\hat z-y\|-2\|\hat z'-\hat z\|>\al\|\hat z-y\|.
\end{align*}
          Let $\hat u^*:=u^*/\theta$, $y^*:=-v^*/\theta$.
Then $(\hat u^*,-y^*)\in N_{\gph F}(\hat x,\hat z)$, and
\begin{gather*}
\|\hat u^*\|+\ga\iv{\|z^*-y^*\|}<\hat\tau q\iv\|\hat z'-y\|^{1-q}.
\end{gather*}	
Then,
\begin{align*}
\|\hat u^*\|+\ga\iv{\|z^*-y^*\|}
&<
\hat\tau q\iv\left(\dfrac{ \tau}{\hat\tau}\right)^{1-q}\|\hat z-y\|^{1-q}\\
&=\hat\tau q\iv\dfrac{\tau}{\hat\tau}\|\hat z-y\|^{1-q}<\tau q\iv \|\hat z-y\|^{1-q}.
\end{align*}	
Then, $q\|\hat z-y\|^{q-1}\|z^*-y^*\|<\eta$ and
$q\|\hat z-y\|^{q-1}\|\hat u^*\|<\tau.$
The last inequality contradicts the assumption.
\item 
Let $\ga:=\tau\iv$,  $x\in B_{\de}(\bx)$, $y\in B_{\de}(\by)$ with $x\notin F\iv(y)$,  and $z\in F(x)$  with $\|z-y\|<\min\{(\tau\mu)^{\frac{1}{q}},1\}$.
Under the assumptions made, the function $\psi_y$ is convex.
For any $(\hat x^*,\hat z^*)\in\sd\psi_y(x,z)$, we have
\begin{align*}
\|(\hat x^*,\hat z^*)\|_{\ga}
&=\sup_{\substack{(u,v)\ne(0,0)}} \dfrac{\ang{(\hat x^*,\hat z^*),(u,v)}} {\|(u,v)\|_{\ga}}\\
&=\limsup_{\substack{u{\to}x,\, v\to z\\
(u,v)\ne(x,z)}} \dfrac{-\ang{(\hat x^*,\hat z^*),(u-x,v-z)}}{\|(u-x,v-z)\|_{\ga}}\\
&\ge\limsup_{\substack{u{\to}x,\, v\to z\\ (u,v)\ne(x,y)}} \dfrac{\psi_y(x,z)-\psi_y(u,v)} {\|(u-x,v-z)\|_{\ga}}\\
&=\limsup_{\substack{u\to x,\,v\to z\\(u,v)\in\gph F,\,(u,v)\ne(x,z)}}
\dfrac{\|z-y\|^q-\|v-y\|^q}{\|(u-x,v-z)\|_\ga}
\ge \tau.
\end{align*}
Observe that $\psi_y$ is the sum of the convex continuous function $v\mapsto g(v):=\|v-y\|^q$ and the indicator function of the convex set $\gph F$.
Note that $z\ne y$, and consequently,
$\partial g(z)=q\|z-y\|^{q-1}\partial\|\cdot-y\|(z).$
By Lemma~\ref{SR}(i), $\sd\psi_y(x,z)=\{0\}\times\sd g(z)+N_{\gph F}(x,z)$.
Let  $\theta:=q\|z-y\|^{q-1}$.
Then, there exist $(u^*,v^*)\in N_{\gph F}(x,z)$ and $z^*\in Y^*$ satisfying $\|z^*\|=1$,  $\langle z^*,z-y\rangle=\|z-y\|$, and
$\|u^*\|+\tau\|\theta z^*+v^*\|\ge \tau.$
Let $\hat u^*:= u^*/\theta$, $ y^*:=-v  ^*/\theta$.
Then $(\hat u^*,-y^*)\in N_{\gph F}(x,z)$, and
$\|\hat u^*\|+\tau\|z^*-y^*\|\ge \tau/\theta.$
Thus, $q\|z-y\|^{q-1}\|\hat u^*\|>\tau(1-\eta)$ if
$q\|z-y\|^{q-1}\|y^*-z^*\|<\eta$
for any $\eta\in (0,1)$.
\end{enumerate}
The proof is complete.
\qed\end{proof}	
\begin{remark}
Parts (i) and (ii) of  Theorem~\ref{P5.6} improve \cite[Theorems~3.1 \& 3.7]{Chu15}, respectively.
In the case $q=1$, Theorem~\ref{P5.6} recaptures \cite[Proposition~5.7]{CuoKru21.4}.
\end{remark}	

\section{The H\"older version of the  Lyusternik-Graves theorem}
\label{S6}

The next theorem establishes the H\"older version of Theorem~\ref{T1.2}.
\begin{theorem}\label{T5.1}
Let $X$ be a complete metric space, $Y$ be a linear  space with a shift-invariant metric, $F:X\rightrightarrows Y$, $(\bx,\by)\in\gph F$, $f: X\rightarrow Y$ with $f(\bx)=0$, $\gph F$ be  closed near 
$(\bx,\by)$, and $q\in(0,1]$.
Then
\begin{align}\label{T5.2-4}
{\rm{rg}}^q(F+f)(\bx,\by)\ge	
{\rm{rg}}^qF(\bx,\by)-{\rm{lip}}^qf(\bx).
\end{align}	
\end{theorem}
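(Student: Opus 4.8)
The plan is to reduce the statement to the slope-based sufficient condition for H\"older metric regularity provided by Theorem~\ref{T3.1}(i). Write $G:=F+f$, and let $\tau<{\rm rg}^qF(\bx,\by)$ and $\ell>{\rm lip}^qf(\bx)$ be arbitrary with $\tau>\ell$ (if ${\rm rg}^qF(\bx,\by)\le{\rm lip}^qf(\bx)$ there is nothing to prove, using the convention $(+\infty)-(+\infty)=0$); it then suffices to show ${\rm rg}^q G(\bx,\by)\ge\tau-\ell$, after which one lets $\tau\uparrow{\rm rg}^qF(\bx,\by)$ and $\ell\downarrow{\rm lip}^qf(\bx)$. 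First I would fix $\de_0,\mu>0$ so that $F$ is metrically regular of order $q$ at $(\bx,\by)$ with $\tau$, $\de_0$, $\mu$ (via Proposition~\ref{P3.1} and Definition~\ref{D1.1}), and so that $d^q(f(x),f(x'))\le\ell\,d(x,x')$ on $B_{\de_0}(\bx)$; I would also shrink so that $\gph F\cap[\overline B_{\de_0}(\bx)\times\cdots]$ is closed. Note $\gph G=\{(x,y+f(x)):(x,y)\in\gph F\}$ is closed near $(\bx,\by)$ as well, and $(\bx,\by)\in\gph G$ since $f(\bx)=0$.

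The core is to verify the $\limsup$ inequality \eqref{R3.1-2} for $G$ with constant $\tau':=\tau-\ell$ and a suitable $\de,\mu,\gamma$. Fix $x\in B_{\de+\mu}(\bx)$, $y\in B_\de(\by)$ with $x\notin G^{-1}(y)$, and $z\in G(x)$ with $d(y,z)<(\tau'\mu)^{1/q}$. Then $z=w+f(x)$ for some $w\in F(x)$, and $x\notin F^{-1}(y-f(x))$; set $y':=y-f(x)$, which lies in a controlled ball around $\by$ since $f(\bx)=0$ and $f$ is H\"older (hence continuous) at $\bx$. Since $\tau$ witnesses metric regularity of order $q$ for $F$, Theorem~\ref{T3.1}(i) applied to $F$ — or more directly the slope estimate that underlies it — gives points $(u,v)\in\gph F$ arbitrarily close to $(x,w)$, $(u,v)\ne(x,w)$, inside the relevant balls, with
\begin{align*}
d^q(w,y')-d^q(v,y')\ge(\tau-\eps)\,d_\gamma((u,v),(x,w))
\end{align*}
for any prescribed $\eps>0$. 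Translating back via $f$: the point $(u,v+f(u))\in\gph G$ is close to $(x,z)$, and using $d^q(v+f(u),y)\le d^q(v,y')+d^q(f(u),f(x))\le d^q(v,y')+\ell\,d(u,x)$ together with $d^q(w,y')=d^q(z,y)$ and the $q$-subadditivity of $t\mapsto t^q$ on $[0,\infty)$ for $q\in(0,1]$, one estimates $d^q(z,y)-d^q(v+f(u),y)\ge d^q(w,y')-d^q(v,y')-\ell\,d(u,x)\ge(\tau-\eps-\ell)\,d_\gamma((u,v),(x,w))$, modulo absorbing the shift-invariance of the metric on $Y$ and the comparison between $d_\gamma$ computed at $(u,v+f(u))$ vs. at $(u,v)$ (here the first coordinate is unchanged and the second changes by $d(v+f(u),w+f(x))\le d(v,w)+d(f(u),f(x))$, which for $\gamma$ chosen and $u\to x$ is controlled by $d_\gamma((u,v),(x,w))$ up to a factor tending to $1$). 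Passing to the $\limsup$ and letting $\eps\downarrow0$ yields \eqref{R3.1-2} for $G$ with constant $\tau-\ell$; Theorem~\ref{T3.1}(i) then gives that $G$ is metrically regular of order $q$ at $(\bx,\by)$ with $\tau-\ell$, so ${\rm rg}^qG(\bx,\by)\ge\tau-\ell$.

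The main obstacle I anticipate is the bookkeeping with the $q$-power inequalities and the parametric metric $d_\gamma$: for $q<1$ the subadditivity $(a+b)^q\le a^q+b^q$ goes the ``right'' way for bounding $d^q(v+f(u),y)$ from above, but one must be careful that the perturbation term really is bounded by $\ell\,d(u,x)$ and not by something like $\ell\,d(u,x)^q$ or $\ell^q d(u,x)$ — this is exactly why the H\"older modulus in Definition~\ref{D1.5} is set up with $d^q(y,\Phi(x))\le\tau d(x,x')$ rather than the other normalization, and matching conventions here is the delicate point. A secondary technical nuisance is ensuring all perturbed points stay inside the balls required by \eqref{R3.1-2}: since $f(\bx)=0$ and $f$ is continuous at $\bx$, one shrinks $\de$ so that $d(f(x),0)$ is small on $B_{\de+\mu}(\bx)$, which keeps $y'=y-f(x)$ near $\by$ and $v+f(u)$ near the target, but the radii $\de,\mu$ for $G$ must be chosen strictly smaller than those for $F$ to leave room for these perturbations; this is routine but must be done explicitly.
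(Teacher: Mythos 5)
Your plan takes a genuinely different route from the paper's, and it contains a gap that is not routine to close as written. The paper proves Theorem~\ref{T5.1} directly with the set-valued contraction mapping principle (Lemma~\ref{L2.1}): one sets $\Phi(u):=F^{-1}(-f(u)+y)$, checks that $\Phi$ is a local contraction with constant $\theta:=\tau^{-1}\mu$ (where $\mu={\rm lip}^qf(\bx)$) on a suitably sized ball around $x$, extracts a fixed point $\hat x\in\Phi(\hat x)$, and reads off the regularity estimate. No slope criterion is needed. Your plan instead routes through Theorem~\ref{T3.1}(i).

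The gap is the step where you assert that metric regularity of order $q$ of $F$ yields points $(u,v)\in\gph F$ \emph{arbitrarily close} to $(x,w)$ satisfying $d^q(w,y')-d^q(v,y')\ge(\tau-\eps)\,d_\gamma((u,v),(x,w))$. That is the $\limsup$ slope lower bound, i.e.\ the \emph{necessary}-condition direction, which is not what Theorem~\ref{T3.1}(i) provides: part~(i) is the \emph{sufficient} direction (slope bound $\Rightarrow$ metric regularity). The converse appears only as Theorem~\ref{T3.1}(ii), and there it is established under the additional hypothesis that $\gph F$ is convex (in normed spaces); the convexity is used precisely to upgrade the single test point $(\hat x,y')$ with $\hat x\in F^{-1}(y')$ — which is all that metric regularity supplies — to a $\limsup$ statement, because for convex $\psi_y$ the local slope coincides with the global slope. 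Without convexity, $(\hat x,y')$ sits at a definite distance roughly $\max\{d(\hat x,x),d(y',w)\}$ from $(x,w)$, with $d(y',w)=d(z,y)$ fixed and nonzero, so no sequence of test points approaching $(x,w)$ comes out of the definition of metric regularity. Thus the $\limsup$ in \eqref{R3.1-2} for $G=F+f$ is not bounded from below by this argument. A secondary mismatch: Theorem~\ref{T3.1}(i) requires completeness of $X\times Y$ (or at least local completeness of $\gph F$ in $X\times Y$), whereas Theorem~\ref{T5.1} assumes only $X$ complete and $Y$ a linear space with a shift-invariant metric; the contraction-mapping proof is run entirely in $X$ and so avoids any completeness assumption on $Y$.

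Your approach could in principle be salvaged by observing that the \emph{proof} of Theorem~\ref{T3.1}(i) in fact establishes the stronger statement with $\sup$ in place of $\limsup$ in \eqref{R3.1-2} (the Ekeland point $(\hat x,\hat z)$ is shown to satisfy a $\sup$ bound, and $\sup\ge\limsup$). A $\sup$ lower bound \emph{is} something metric regularity of $F$ supplies via a single test point; with $u:=\hat x\in F^{-1}(y')$ and $v:=y'+f(\hat x)$, shift-invariance gives $d(v,y)=d(f(\hat x),f(x))$, so $d^q(v,y)\le\ell\,d(\hat x,x)$, and the numerator $d^q(z,y)-d^q(v,y)\ge d^q(w,y')(1-\ell\xi/\tau)$ can be made to dominate the denominator after choosing $\gamma$ and shrinking the radii. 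But this is a restatement of what the proof of Theorem~\ref{T3.1}(i) establishes rather than an application of the theorem as stated, and it is not what you wrote. Even so, the paper's contraction-mapping argument is cleaner, needs weaker assumptions on $Y$, and dispenses with the $q$-power bookkeeping on the perturbed norm $\|\cdot\|_\gamma$ that your route would require.
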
	

\begin{proof}
If either ${\rm{rg}}^qF(\bx,\by)=0$  or ${\rm{lip}}^{q}f(\bx)=+\infty$, then \eqref{T5.2-4} holds automatically.
Suppose $F$ is metrically regular of order $q$ at $(\bx,\by)$, and $f$ is  H\"older continuous of order $q$ at $\bx$.
Let $\mu:={\rm{lip}}^{q}f(\bx)$.
If  $\mu\ge {\rm{rg}}^qF(\bx,\by)$, then there is nothing to prove.
Suppose ${\mu}< {\rm{rg}}^qF(\bx,\by)$ and let $\tau\in (\mu, {\rm{rg}}^qF(\bx,\by))$.
Choose a $\ga>0$ such that $\tau-\mu>\gamma\iv$.
Under the assumptions made, there exists a $\de>0$  such that the following conditions are satisfied:
\begin{enumerate}
\item[(a)]
inequality 
\eqref{D1.1-1} holds for all $x\in B_{\de}(\bx)$ and $y\in B_{\de}(\by)$;
\item[(b)]
the set $\gph F\cap[\overline B_{\de}(\bx)\times\overline B_{\de}(\by)]$ is closed;
\item[(c)]
$d^q(f(x),f(x'))\le\mu d(x,x')	$ for all $x,x'\in\overline B_{\de}(\bx)$.
\end{enumerate}
Choose a $\hat\de>0$ such that
$\gamma(4\hat\de)^q+\hat\de<\de$,
$\mu^{\frac{1}{q}}(\gamma(4\hat\de)^q+\hat\de)^{{\frac{1}{q}}}+\hat\de<\de$, and $\hat\de^{1-q}<\gamma(2^q-1)$.
Let $x\in B_{\hat\de}(\bx)$ and $y\in B_{\hat\de}(\by)$.
We first show that 
\begin{gather}\label{T1.1-1}
d(x,(F+f)\iv(y))\le \gamma d^q(y,y')
\end{gather}	
for all $y'\in (F(x)+f(x))\cap B_{3\hat\de}(\by)$.
If $y'=y$, then inequality \eqref{T1.1-1} is satisfied.
Suppose $y'\ne y$.
Define $\Phi: X\rightrightarrows X$ by $\Phi(u):=F\iv(-f(u)+y)$ for all $u\in B_{\hat\de}(\bx)$.
We are going to prove that three conditions (i)-(iii) in Lemma~\ref{L2.1}
are satisfied with  $\de':=\ga d^q(y,y')$ and $\theta:=\tau\iv\mu$.
\begin{enumerate}
\item 
Observe that $\de'\le \gamma(d(y,\by)+d(\by,y'))^q< \gamma(4\hat\de)^q$.
Let $\{(x_n,z_n)\}_{n\in\N}$ be a sequence in
$\gph\Phi\cap[\overline B_{\de'}(x)\times \overline B_{\de'}(x)]$ and suppose that it converges to a point $(\hat x,\hat z)\in X\times Y$.
For all $n\in\N$, it holds that
$(z_n,-f(x_n)+y)\in\gph F$,
\begin{gather*}
d(z_n,\bx)\le d(z_n,x)+d(x,\bx)<\de'+\hat\de<\gamma(4\hat\de)^q+\hat\de<\de,
\end{gather*}	
and
\begin{align*}
d(-f(x_n)+y,\by)
&\le d(f(x_n),0)+d(y,\by)
\le \mu^{\frac{1}{q}} d^{{\frac{1}{q}}}(x_n,\bx)+\hat\de\\
&<\mu^{\frac{1}{q}} (\de'+\hat\de)^{{\frac{1}{q}}}+\hat\de<\mu^{\frac{1}{q}} (\gamma(4\hat\de)^q+\hat\de)^{{\frac{1}{q}}}+\hat\de<\de.
\end{align*}	
Thus, $(z_n,-f(x_n)+y)\in\gph F\cap[\overline B_{\de}(\bx)\times\overline B_{\de}(\by)]$ for all $n\in\N$.
Note that $\overline B_{\de'}(x)\subset \overline B_{\de}(\bx)$ since
for any $x'\in \overline B_{\de'}(x)$, one has $d(x',\bx)\le d(x',x)+d(x,\bx)\le\de'+\hat\de<\de.$
The continuity of $f$ on $\overline B_{\de'}(x)$ implies  $(\hat z,-f(\hat x)+y)\in\gph F\cap[\overline B_{\de'}(x)\times\overline B_{\de}(\by)]$.
Thus, $(\hat x,\hat z)\in\gph\Phi\cap[\overline B_{\de'}(x)\times\overline B_{\de'}(x)]$, and consequently,
 $\gph\Phi\cap[\overline{B}_{\de'}(x)\times \overline{B}_{\de'}(x)]$ is closed.
\item 
One has $$d(-f(x)+y,\by)\le d(f(x),0)+d(y,\by)
<\mu^{\frac{1}{q}}\hat\de^{{\frac{1}{q}}}+\hat\de <\de.$$
Hence, $-f(x)+y\in B_{\de}(\by)$.
One has
\begin{align*}
d(x,\Phi(x))
&= d(x,F\iv(-f(x)+y))
\le\tau\iv d^q(-f(x)+y,F(x))\\
&\le\tau\iv d^q(-f(x)+y,-f(x)+y')=
\tau\iv d^q(y,y')\\
&<\gamma d^q(y,y')(1-\tau\iv\mu)=\de'(1-\theta).
\end{align*}	
\item
Observe that
$$d(-f(v )+y,\by)
\le d(f(v),0)+d(y,\by)<\mu^{\frac{1}{q}}\hat\de^{{\frac{1}{q}}}+\hat\de
<\de$$ for any $v\in\overline B_{\de'}(x)$.
For all  $u,v\in\overline B_{\de'}(x)$, one has
\begin{align*}
e(\Phi(u)\cap\overline B_{\de'}(x),\Phi(v))
&=\sup_{z\in F\iv(-f(u)+y)\cap \overline B_{\de'}(x)}d(z,F\iv(-f(v)+y))\\
&\le\tau\iv\sup_{z\in F\iv(-f(u)+y)\cap \overline B_{\de'}(x)} d^q(-f(v)+y,F(z))\\
&\le\tau\iv d^q(f(u),f(v))
\le\tau\iv\mu d(u,v)=\theta d(u,v).
\end{align*}	

\end{enumerate}	
By Lemma~\ref{L2.1}, there exists an
$\hat x\in \overline B_{\de'}(x)$ such that
$\hat x\in\Phi(\hat x)$.
In other words, $y\in F(\hat x)+f(\hat x)$ and $d(\hat{x},x)\le \gamma d^q(y,y')$, and consequently,
\eqref{T1.1-1} holds. 
	
We now arrive at the final stage of showing that
\begin{gather}\label{T1.1-2}
d(x,(F+f)\iv(y))\le 
\gamma d^q(y,F(x)+f(x)).
\end{gather}	
If $F(x)+f(x)=\emptyset$, then there is nothing to prove.
Let $\varepsilon>0$ and choose a point $\hat y\in F(x)+f(x)$ with
$d^q(y,\hat y)< d^q(y,F(x)+f(x))+\varepsilon$.
If $\hat y\in B_{3\hat\de}(\by)$, then	in view of \eqref{T1.1-1}, one obtains
\begin{gather*}
d(x,(F+f)\iv(y))\le\gamma d^q(y,\hat y)\le
\gamma(d^q(y,F(x)+f(x))+\varepsilon).
\end{gather*}	
Letting $\varepsilon\downarrow 0$, one arrives at \eqref{T1.1-2}.
If $\hat y\notin B_{3\hat\de}(\by)$, then
$d(\hat y,y)\ge d(\hat y,\by)-d(y,\by)>2\hat\de,$
and consequently,
\begin{align*}
d(x,(F+f)\iv(y))	
&\le
e(B_{\hat\de}(\bx),(F+f)\iv(y))=\sup_{x'\in B_{\hat\de}(\bx)}d(x',(F+f)\iv(y))\\
&<\hat\de+d(\bx,(F+f)\iv(y))\le {\hat\de}+\tau\iv d^q(y,\by)
<
{\hat\de}+\gamma d^q(y,\by)\\
&<\hat\de+\gamma\hat\de^q\le \gamma(2^q-1)\hat\de^q+\gamma\hat\de^q
=\gamma(2\hat\de)^q\\
&<\gamma d^q(\hat y,y)< \gamma(d^q(y,F(x)+f(x))+\varepsilon).
\end{align*}		 
Letting $\varepsilon\downarrow 0$, one arrives at \eqref{T1.1-2}.
The inequality  \eqref{T5.2-4} is obtained by letting $\gamma\downarrow (\tau-\mu)$.
The proof is complete.
\qed\end{proof}	

\begin{remark}
Theorem~\ref{T5.1} offers an affirmative answer to the question posed by Dontchev \cite[p. 47]{Don15} and enhances \cite[Theorem~3]{HeNg18} providing the exact quantitative estimate for the modulus of regularity of the perturbed mapping.
\end{remark}	

Let $f,g:X\rightarrow Y$ be functions between metric spaces, and $q>0$.
We say that $g$ is a strict approximation of order $q$ to $f$ at $\bx\in X$ if $f(\bx)=g(\bx)$ and ${\rm{lip}}^q(f-g)(\bx)=0$.
In the case $q=1$, the function $g$ is called a strict first-order approximation to $f$ at $\bx$; cf.  \cite[p. 40]{DonRoc14}.

\begin{corollary}\label{C4.1}
Let $X$ be a complete metric space, $Y$ be a linear  space with a shift-invariant metric, $F:X\rightrightarrows Y$, $(\bx,\by)\in\gph F$, $f,g:X\rightarrow Y$ with $f(\bx)=0$, $\gph F$ be closed near $(\bx,\by)$, and $q\in(0,1]$.
Suppose that $g$ is a strict approximation of order $q$ to $f$ at $\bx$.
The mapping $F+f$ is metrically regular of order $q$ at $(\bx,\by)$ if and only if $F+g$ is metrically regular of order $q$ at $(\bx,\by)$.
Moreover, 
\begin{align*}
{\rm{rg}}^q(F+f)(\bx,\by)={\rm{rg}}^q(F+g)(\bx,\by).
\end{align*}	
\end{corollary}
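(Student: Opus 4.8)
The plan is to derive the corollary from the H\"older Lyusternik--Graves theorem (Theorem~\ref{T5.1}) by perturbing $F+f$ into $F+g$ and back. First I would record what the hypothesis that $g$ is a strict approximation of order $q$ to $f$ at $\bx$ gives us. By definition $f(\bx)=g(\bx)=0$, hence $(g-f)(\bx)=(f-g)(\bx)=0$; moreover ${\rm{lip}}^q(f-g)(\bx)=0$. Since the metric on $Y$ is shift-invariant one has $d(-a,-b)=d(a,b)$ for all $a,b\in Y$ (apply the shift $z:=a+b$), so that $d\big((g-f)(x),(g-f)(x')\big)=d\big((f-g)(x),(f-g)(x')\big)$ for all $x,x'\in X$, and therefore ${\rm{lip}}^q(g-f)(\bx)={\rm{lip}}^q(f-g)(\bx)=0$ as well; in particular $f-g$ and $g-f$ are H\"older continuous of order $q$, hence continuous, on a neighbourhood of $\bx$. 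Also, since $(\bx,\by)\in\gph F$ and $f(\bx)=g(\bx)=0$, we have $(\bx,\by)\in\gph(F+f)\cap\gph(F+g)$, so Definition~\ref{D1.1} applies to both mappings; and, $Y$ being linear, $(F+f)+(g-f)=F+g$ and $(F+g)+(f-g)=F+f$ as mappings $X\rightrightarrows Y$.

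Next I would verify the one hypothesis of Theorem~\ref{T5.1} that is not yet in place, that $\gph(F+f)$ and $\gph(F+g)$ are closed near $(\bx,\by)$. Granting that $\gph F$ is closed near $(\bx,\by)$, this is routine once $f$, and hence $g=f+(g-f)$, is known to be continuous near $\bx$; alternatively, since $g-f$ is continuous near $\bx$, the map $(x,y)\mapsto(x,y-(g-f)(x))$ is a homeomorphism between neighbourhoods of $(\bx,\by)$ (it fixes $(\bx,\by)$ and has the continuous inverse $(x,y)\mapsto(x,y+(g-f)(x))$) and carries $\gph(F+g)$ onto $\gph(F+f)$, so the two graphs are closed near $(\bx,\by)$ simultaneously. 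I expect this closedness check --- the thing that makes Theorem~\ref{T5.1} legitimately applicable to both $F+f$ and $F+g$ --- to be the only real (and still minor) obstacle; the rest is bookkeeping.

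Finally I would apply Theorem~\ref{T5.1} to the set-valued mapping $F+f$ with the perturbation $g-f$, which vanishes at $\bx$ and has ${\rm{lip}}^q(g-f)(\bx)=0$; by \eqref{T5.2-4},
\begin{align*}
{\rm{rg}}^q(F+g)(\bx,\by)
&={\rm{rg}}^q\big((F+f)+(g-f)\big)(\bx,\by)\\
&\ge{\rm{rg}}^q(F+f)(\bx,\by)-{\rm{lip}}^q(g-f)(\bx)\\
&={\rm{rg}}^q(F+f)(\bx,\by).
\end{align*}
Applying the same reasoning to $F+g$ with the perturbation $f-g$ (which likewise vanishes at $\bx$ with modulus $0$) gives the reverse inequality, whence ${\rm{rg}}^q(F+f)(\bx,\by)={\rm{rg}}^q(F+g)(\bx,\by)$, the asserted identity. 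The equivalence then follows at once: as recalled earlier, a mapping whose graph contains $(\bx,\by)$ is metrically regular of order $q$ at $(\bx,\by)$ precisely when its modulus ${\rm{rg}}^q$ at $(\bx,\by)$ is strictly positive, so since the two moduli coincide, $F+f$ enjoys the property if and only if $F+g$ does.
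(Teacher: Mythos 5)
Your argument follows the same route as the paper: decompose $F+f=(F+g)+(f-g)$ and $F+g=(F+f)+(g-f)$, apply Theorem~\ref{T5.1} in each direction, and use ${\rm{lip}}^q(f-g)(\bx)={\rm{lip}}^q(g-f)(\bx)=0$ to get the two inequalities that pin down the equality of the moduli. The paper's own proof is a one-liner to exactly this effect, so the approach is identical; you simply carry out the bookkeeping that the paper leaves implicit.

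One thing worth flagging, since you raise it yourself: the closedness hypothesis. Theorem~\ref{T5.1} is applied here with base mapping $F+g$ (resp.\ $F+f$), so it is $\gph(F+g)$ (resp.\ $\gph(F+f)$), not $\gph F$, that must be closed near $(\bx,\by)$. The corollary only assumes $\gph F$ closed near $(\bx,\by)$, and neither $f$ nor $g$ is assumed continuous---only $f-g$ is, via ${\rm{lip}}^q(f-g)(\bx)=0$. Your first fix (``once $f$ is known to be continuous near $\bx$'') therefore invokes a hypothesis that is not available, and your second fix (the homeomorphism $(x,y)\mapsto(x,y-(g-f)(x))$) correctly shows that $\gph(F+f)$ and $\gph(F+g)$ are closed near $(\bx,\by)$ \emph{simultaneously}, but does not show that either one actually \emph{is} closed. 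So this point is not fully resolved by your argument---but the paper's own proof does not address it at all, so your proposal is at least as complete as the original and, by surfacing the issue, arguably more careful. (In the intended applications, e.g.\ Corollary~\ref{C4.2}, $f$ is taken to be continuously differentiable, so the gap is harmless there; but as a matter of logic the hypothesis of Corollary~\ref{C4.1} should be that $\gph(F+f)$ is closed near $(\bx,\by)$, or that $f$ is continuous near $\bx$.)
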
	
\begin{proof}
Observe that $F+f=F+g+(f-g)$  and ${\rm{lip}}^{q}(f-g)(\bx)=0$.
The statement is a direct consequence of 
Theorem~\ref{T5.1}.
\qed\end{proof}

In the case  $q=1$, Corollary~\ref{C4.1} yields the following result; cf. \cite[Corollary~5.3]{Don21.2}, \cite[Corollary~2.6]{DonRoc04}.
\begin{corollary}\label{C4.2}
Let $X,Y$ be Banach spaces, $F:X\rightrightarrows Y$, $\gph F$ be closed, $f:X\rightarrow Y$ be continuously Fr\'echet differentiable at $\bx\in X$, and $\by\in F(\bx)+f(\bx)$.
The mapping $F+f$ is metrically regular  at $(\bx,\by)$ if and only if $F(\cdot)+f(\bx)+\nabla f(\bx)(\cdot-\bx)$ is metrically regular at $(\bx,\by)$.
Moreover, 
\begin{align*}
{\rm{rg}}(F+f)(\bx,\by)={\rm{rg}}(F(\cdot)+f(\bx)+\nabla f(\bx)(\cdot-\bx))(\bx,\by).
\end{align*}	
\end{corollary}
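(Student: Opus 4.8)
The plan is to derive Corollary~\ref{C4.2} from Corollary~\ref{C4.1} by setting $g(x):=f(\bx)+\nabla f(\bx)(x-\bx)$ and observing that this affine map is a strict first-order approximation (i.e. strict approximation of order $q=1$) to $f$ at $\bx$. First I would note that, after replacing $f$ by $f(\cdot)-f(\bx)$ (which amounts to shifting $\by$ accordingly and does not change metric regularity, since $Y$ is a Banach space so its norm is shift-invariant), we may assume without loss of generality $f(\bx)=0$, as required in the hypotheses of Corollary~\ref{C4.1}; likewise $g(\bx)=f(\bx)$. Then the only substantive thing to check is that $\mathrm{lip}^1(f-g)(\bx)=0$, which is exactly the statement that $g$ is a strict first-order approximation to $f$ at $\bx$.

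The key step is therefore to verify ${\rm{lip}}(f-g)(\bx)=0$ from the assumption that $f$ is continuously Fr\'echet differentiable at $\bx$. Writing $h:=f-g$, one has $h(\bx)=0$ and $\nabla h(\bx)=0$, and $\nabla h$ is continuous at $\bx$ because $\nabla f$ is. By the mean value inequality (valid on Banach spaces), for $x,x'$ in a small ball around $\bx$,
\[
\|h(x)-h(x')\|\le \Big(\sup_{w\in[x,x']}\|\nabla h(w)\|\Big)\|x-x'\|,
\]
and the supremum tends to $0$ as the ball shrinks, by continuity of $\nabla h$ at $\bx$ together with $\nabla h(\bx)=0$. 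Hence for every $\varepsilon>0$ there is $\de>0$ with $\|h(x)-h(x')\|\le\varepsilon\|x-x'\|$ for all $x,x'\in B_\de(\bx)$, which gives $\mathrm{lip}(f-g)(\bx)\le\varepsilon$ for every $\varepsilon>0$, i.e. $\mathrm{lip}(f-g)(\bx)=0$. This makes $g$ a strict approximation of order $1$ to $f$ at $\bx$ in the sense defined just before Corollary~\ref{C4.1}.

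Finally I would apply Corollary~\ref{C4.1} with $q=1$: since $X$ is complete, $Y$ is a Banach space (hence a linear space with the shift-invariant norm-metric), $\gph F$ is closed, $f(\bx)=0$, and $g$ is a strict approximation of order $1$ to $f$ at $\bx$, it follows that $F+f$ is metrically regular at $(\bx,\by)$ if and only if $F+g$ is, with ${\rm{rg}}(F+f)(\bx,\by)={\rm{rg}}(F+g)(\bx,\by)$; and $F+g$ is precisely $F(\cdot)+f(\bx)+\nabla f(\bx)(\cdot-\bx)$. The main (though mild) obstacle is the bookkeeping around the normalization $f(\bx)=0$ versus the hypothesis $\by\in F(\bx)+f(\bx)$: one must be careful that translating the target point by the constant $f(\bx)$ leaves both metric regularity and its modulus unchanged, which holds because the metric on $Y$ is shift-invariant; alternatively, one can simply invoke Theorem~\ref{T5.1} directly with the perturbation $f-g$ (noting $F+f=(F+g)+(f-g)$ and $(F+g)+(g-f)=F+f$) to get the two-sided estimate without renormalizing, which is arguably cleaner.
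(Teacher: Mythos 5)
Your proof is correct and takes the same route as the paper: set $g(x):=f(\bx)+\nabla f(\bx)(x-\bx)$, verify ${\rm{lip}}(f-g)(\bx)=0$, and invoke Corollary~\ref{C4.1} with $q=1$. You in fact supply more detail than the paper's two-line argument, both in deducing ${\rm{lip}}(f-g)(\bx)=0$ from continuous differentiability via the mean value inequality and in handling the normalization to $f(\bx)=0$ (with the attendant shift of $\by$) that the hypotheses of Corollary~\ref{C4.1} formally require but the paper leaves implicit.
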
	

\begin{proof}
Let $g(x):=f(\bx)+\nabla f(\bx)(x-\bx)$ for all $x\in X$.
The continuous differentiability of $f$ implies that
${\rm{lip}}(f-g)(\bx)=0$.
The statement follows from Corollary~\ref{C4.1} with $q=1$.
\qed\end{proof}	


\section{Convergence analysis of a Newton-type method}\label{S5}
Let $X,Y$ be Banach spaces,  $F:X\rightrightarrows Y$, and $f: X\rightarrow Y$.
Consider  the problem 
\begin{center}
	`find $x\in X$ such that $f(x)+F(x)\ni 0$'.
\end{center}	
The aforementioned model has been used to describe in a unified way various problems \cite{DonRoc14,KlaKum02,AleMik14}.
The classical case of a nonlinear equation corresponds to $F(x)=0$, whereas by taking $Y=\R^n$ and $F=\R^n_+$ one has a system of inequality constraints.
The case of $F$ being the normal cone mapping associated with a closed convex subset of a normed space $X$ and $Y=X^*$ results in a variational inequality.

Let $S:=\{x\in X\mid f(x)+F(x)\ni 0\}$ be the solution set.
From now on, we assume that $\bx$ is a given point in $S$.
Consider the Newton sequence  $\{x_k\}_{k\in\N}$ given by
\begin{gather*}
f(x_k)+\nabla f(x_k)(x_{k+1}-x_k)+F(x_{k+1})\ni 0
\;\;
\text{for}
\;\;
k=0,1,2,\ldots.
\end{gather*}

\begin{theorem}\label{T6.2}
Suppose $\gph F$ is closed, $f$ is continuously Fr\'  echet differentiable near $\bx$, the derivative mapping $\nabla f$ is Lipschitz continuous at $\bx$, and $F+f$ is metrically regular  at $(\bx,0)$.
Then there exists a $\de>0$ such that for any $x^\star\in S\cap B_{\frac{\de}{2}}(\bx)$ and $u\in B_{\de}(\bx)$,
there exists a Newton sequence $\{x_k\}_{k\in\N}$ in $B_\de(\bx)$ with $x_0:=u$ converging quadratically to $x^\star$.
\end{theorem}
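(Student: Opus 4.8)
The plan is to realise the Newton iteration as a sequence of auxiliary fixed‑point problems, each solved by the contraction mapping principle (Lemma~\ref{L2.1}), using metric regularity of $F+f$ at $(\bx,0)$ only through its equivalent ``Aubin form'' for $(F+f)\iv$, together with the second‑order Taylor estimate supplied by the Lipschitz continuity of $\nabla f$ at $\bx$.

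First I would fix the constants. From metric regularity of $F+f$ at $(\bx,0)$, choose $\kappa>{\rm{rg}}(F+f)(\bx,0)\iv$ and $\de_0>0$ so that $e\big((F+f)\iv(y)\cap\overline B_{\de_0}(\bx),(F+f)\iv(y')\big)\le\kappa\,d(y,y')$ for all $y,y'\in B_{\de_0}(0)$. From Lipschitz continuity of $\nabla f$ at $\bx$, choose $L>0$ and $\de_1>0$ with $\|\nabla f(x)-\nabla f(x')\|\le L\|x-x'\|$ on $B_{\de_1}(\bx)$, which yields the two standard estimates $\|f(v)-f(x')-\nabla f(x')(v-x')\|\le\tfrac{L}{2}\|v-x'\|^2$ and $\|f(u)-f(v)-\nabla f(w)(u-v)\|\le 2L\de\,\|u-v\|$ whenever the points involved lie in a ball $B_\de(\bx)\subseteq B_{\de_1}(\bx)$. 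Put $\theta:=\tfrac12$ and $C:=\kappa L/(1-\theta)$, and pick $\de\in(0,\min\{\de_0,\de_1\}]$ small enough that $2\kappa L\de\le\theta$, that $C\big(\tfrac{3\de}{2}\big)^2<\tfrac{\de}{2}$, and that all the $O(\de^2)$ quantities arising below stay inside $B_{\de_0}(\bx)$ and $B_{\de_0}(0)$.

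Next, given $x^\star\in S\cap B_{\de/2}(\bx)$ and $u\in B_\de(\bx)$, I set $x_0:=u$ and build the sequence by induction. Assuming $x_k\in B_\de(\bx)$, define $\phi_k(v):=f(v)-f(x_k)-\nabla f(x_k)(v-x_k)$ and $\Psi_k(v):=(F+f)\iv(\phi_k(v))$; since $x^\star\in S$ gives $0\in(F+f)(x^\star)$, and since $0\in f(x_k)+\nabla f(x_k)(v-x_k)+F(v)$ is equivalent to $\phi_k(v)\in(F+f)(v)$, i.e.\ to $v\in\Psi_k(v)$, a fixed point of $\Psi_k$ is exactly the desired Newton iterate $x_{k+1}$. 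I then check the hypotheses of Lemma~\ref{L2.1} at centre $x^\star$ with radius $\de_k':=C\|x_k-x^\star\|^2$ and contraction factor $\theta$: closedness of $\gph\Psi_k$ near the relevant points follows from closedness of $\gph F$ (hence of $\gph(F+f)$ near $(\bx,0)$) and continuity of $\phi_k$; the displacement bound $d(x^\star,\Psi_k(x^\star))\le\kappa\|\phi_k(x^\star)\|\le\tfrac{\kappa L}{2}\|x_k-x^\star\|^2<\de_k'(1-\theta)$ uses $x^\star\in(F+f)\iv(0)$, the Aubin estimate and the Taylor bound; and $e\big(\Psi_k(u')\cap B_{\de_k'}(x^\star),\Psi_k(v')\big)\le\kappa\|\phi_k(u')-\phi_k(v')\|\le 2\kappa L\de\,\|u'-v'\|\le\theta\|u'-v'\|$ combines the Aubin estimate with the Lipschitz bound on $\nabla f$. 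Lemma~\ref{L2.1} then yields $x_{k+1}\in\overline B_{\de_k'}(x^\star)$ with $0\in f(x_k)+\nabla f(x_k)(x_{k+1}-x_k)+F(x_{k+1})$ and $\|x_{k+1}-x^\star\|\le C\|x_k-x^\star\|^2$.

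Finally I close the induction and extract convergence: since $x_k\in B_\de(\bx)$ and $x^\star\in B_{\de/2}(\bx)$ we have $\|x_k-x^\star\|<\tfrac{3\de}{2}$, so $\|x_{k+1}-\bx\|\le\|x_{k+1}-x^\star\|+\|x^\star-\bx\|<C\big(\tfrac{3\de}{2}\big)^2+\tfrac{\de}{2}<\de$, hence $x_{k+1}\in B_\de(\bx)$ and the construction proceeds for all $k$; moreover $C\|x_k-x^\star\|<C\tfrac{3\de}{2}<1$, so the bound $\|x_{k+1}-x^\star\|\le C\|x_k-x^\star\|^2$ forces $x_k\to x^\star$ and is exactly the quadratic rate of the definition. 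The main obstacle is not any single estimate but the compatibility of the constant choices: the single radius $\de$ must simultaneously keep all auxiliary points of the excess estimates inside the neighbourhoods $B_{\de_0}(\bx),B_{\de_0}(0)$ where the Aubin property of $(F+f)\iv$ is available, force the contraction factor below $1$, and make the quadratic map $t\mapsto Ct^2$ carry $[0,\tfrac{3\de}{2}]$ into $[0,\tfrac{\de}{2}]$ so the iterates cannot leave $B_\de(\bx)$; verifying that all these constraints hold for $\de$ small is the delicate bookkeeping of the argument.
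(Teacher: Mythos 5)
Your proposal is correct, but it takes a genuinely different route from the paper's own proof. The paper proves Theorem~\ref{T6.2} as an \emph{application} of its main Lyusternik--Graves result: it first invokes Corollary~\ref{C4.2} to see that the fixed linearization $\Phi_{\bx}(x)=f(\bx)+\nabla f(\bx)(x-\bx)+F(x)$ is metrically regular at $(\bx,0)$ with the same modulus as $F+f$, then writes $\Phi_u=\psi_u+\Phi_{\bx}$ with a small-Lipschitz perturbation $\psi_u$ and applies Theorem~\ref{T5.1} (with $q=1$) to conclude that each $\Phi_u$, $u\in B_{\de'}(\bx)$, is metrically regular at $(\bx,\psi_u(\bx))$ with a uniform constant and neighbourhood. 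The Newton step is then produced by one direct use of that estimate, $\tau\, d(x^\star,\Phi_u\iv(0))\le d(0,\Phi_u(x^\star))\le\tfrac{\mu}{2}\|u-x^\star\|^2$, which hands over an $x_1\in\Phi_u\iv(0)$ at quadratic distance from $x^\star$. You never establish metric regularity of the linearizations $\Phi_u$; instead you recast each Newton step as a fixed-point problem for $\Psi_k(v)=(F+f)\iv(\phi_k(v))$, $\phi_k(v)=f(v)-f(x_k)-\nabla f(x_k)(v-x_k)$, and solve it by the set-valued contraction principle (Lemma~\ref{L2.1}) centred at $x^\star$, using only the Aubin property of $(F+f)\iv$ together with the two Taylor-type estimates from the Lipschitz continuity of $\nabla f$. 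In effect you inline a Lyusternik--Graves-type contraction argument at every iteration rather than invoking the theorem once as a black box; this is closer to the classical Dontchev--Rockafellar treatment of Newton's method for generalized equations, and is more self-contained, while the paper's route showcases the role of the new Theorem~\ref{T5.1}. Both arguments deliver $\|x_{k+1}-x^\star\|\le C\|x_k-x^\star\|^2$ with comparable constants and the same bookkeeping for keeping the iterates inside $B_\de(\bx)$. One tiny point you leave implicit: when $x_k=x^\star$ the radius $\de_k'$ is zero and Lemma~\ref{L2.1} cannot be applied as stated, but then $\phi_k(x^\star)=0$ so $x^\star\in\Psi_k(x^\star)$ is already a fixed point and one may take $x_{k+1}:=x^\star$; the paper handles the analogous degenerate case explicitly.
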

\begin{proof}
For each $u\in X$, define $\Phi_u: X\rightrightarrows Y$ by  $\Phi_u(x):=f(u)+\nabla f(u)(x-u)+F(x)$ for all $x\in X$.
We first show that there exist $\tau>0$ and $\de'>0$ such that
\begin{gather}\label{phi}
	\tau d(x,\Phi_u\iv(0))\le d(0,\Phi_u(x))	
\end{gather}	
for all $u,x\in B_{\de'}(\bx)$.
By Corollary~\ref{C4.2}, the mapping $\Phi_{\bx}$ is metrically regular at $(\bx,0)$, and ${\rm{rg}}\Phi_{\bx}(\bx,0)={\rm{rg}}(F+f)(\bx,0)$.
Let $\gamma>0$ be such that $\frac{1}{2}{\rm{lip}}\nabla f(\bx)/{\rm{rg}}(F+f)(\bx,0)<\gamma.$
Choose positive numbers $\tau,\tau',\mu,\mu'$ such that
$\tau<\tau'-\mu'<\tau'<{\rm{rg}}(F+f)(\bx,0)$,
$\mu>{\rm{lip}}\nabla f(\bx)$, and $\frac{1}{2}\mu\tau\iv<\gamma.$
The Lipschitz condition guarantees the existence of some constants $\de_1>0$ and $\lambda>0$ such that $\|\nabla f(x)-\nabla f(x')\|\le \lambda \|x-x'\|$ for all $x,x'\in B_{\de_1}(\bx).$
Taking $\de_1$ smaller if necessary, one can ensure that $\Phi_{\bx}$ is metrically regular at $(\bx,0)$ with constants $\tau$ and $\de_1$, and
$\|\nabla f(u)-\nabla f(\bx)\|\le\mu'$ for all $u\in B_{\de_1}(\bx).$
Let $u\in B_{\de_1}(\bx)$.
Define $\psi_u:B_{\de_1}(\bx)\rightarrow Y$   by
\begin{gather*}
\psi_u(x):=f(u)+\nabla f(u)(x-u)-f(\bx)-\nabla f(\bx)(x-\bx)
\;\;
\text{for all}
\;\
x\in B_{\de_1}(\bx).
\end{gather*}	
We obtain
$\|\psi_u(x)-\psi_u(x')\|\le \|\nabla f(u)-\nabla f(\bx)\|\|x-x'\|\le\mu'\|x-x'\|$
for all $x,x'\in B_{\de'}(\bx)$.
Thus, $\psi_u$ is Lipschitz continuous near $\bx$ with ${\rm{lip}}\psi_u(\bx)\le\mu'$.
In view of Theorem~\ref{T5.1} with $q=1$, the mapping
$\Phi_u(x)=\psi_u+\Phi_{\bx}$  is metrically regular at  $(\bx,\psi_u(\bx))$ with constant $\tau'-\mu'$ (and consequently, with constant $\tau$) for some $\de_2\in(0,\de_1]$.
In other words, $\tau d(x,\Phi_u\iv(y))\le d(y,\Phi_u(x))$
for all $x\in B_{\de_2}(\bx)$ and $y\in B_{\de_2}(\psi_u(\bx)).$
Choose a $\de'\in (0,\de_2]$ such that $\frac{\gamma}{2}\de'^2\le\de_2$.
For any $u\in B_{\de'}(\bx)$,
\begin{align*}
\|\psi_u(\bx)\|
&=\|f(u)+\nabla f(u)(\bx-u)-f(\bx)\|\\
&=\left\|\int_{0}^{1}\nabla f(\bx+t(u-x))(u-\bx)dt-\nabla f(u)(u-\bx)\right\|\\ 
&\le\gamma\|u-\bx\|^{2}\int_{0}^{1}(1-t)dt=\dfrac{\gamma}{2}\|u-\bx\|^{2}<\dfrac{\gamma}{2}\de'^2\le\de_2.
\end{align*}	
Thus, \eqref{phi} holds for all $u,x\in B_{\de'}(\bx)$.
Choose $\de\in(0,\de']$ such that
$\frac{9}{2}\gamma \de \le1$, and let $x^\star\in S\cap B_{\frac{\de}{2}}(\bx),$  $u\in B_{\de}(\bx)$.
We are going to prove the existence of $x_1$ satisfying
\begin{gather}\label{T5.3-6}
\Phi_u(x_1)\ni 0,\;\;\;
\|x_1-x^\star\|\le \gamma\|u-x^\star\|^{2},\;\;\;
x_1\in B_{\de}(\bx).
\end{gather}	
If $d(0,\Phi_u(x^\star))=0$, then \eqref{T5.3-6} holds with  $x_1:=x^\star$.
Suppose $d(0,\Phi_u(x^\star))>0$.
One has
\begin{align*}
d(x^\star,\Phi_u\iv(0))
&\le \tau\iv d(0,\Phi_u(x^\star))<2\gamma\mu\iv d(0,\Phi_u(x^\star))\\
&\le 2\gamma\mu\iv \|f(u)+\nabla f(u)(x-u)-f(x^\star)\|\le \gamma\|u-x^\star\|^{2}.
\end{align*}
Thus, there exists an  $x_1\in \Phi_u\iv(0)$ such that
$\|x_1-x^\star\|\le \gamma\|u-x^\star\|^{2}.$
Besides,
\begin{align*}
\|x_1-\bx\|
&\le \|x_1-x^\star\|+\|x^\star-\bx\|<\gamma\|u-x^\star\|^{2}+\dfrac{\de}{2}\\
&\le \gamma (\|u-\bx\|+\|x^\star-\bx\|)^{2}+\dfrac{\de}{2}< \dfrac{9}{4}\gamma \de^{2}+\dfrac{\de}{2}<\dfrac{\de}{2}+\dfrac{\de}{2}=\de.
\end{align*}	
Applying the same argument  with $u:=x_1$, one  obtains the existence of $x_2$ such that
\begin{gather*}
\Phi_u(x_2)\ni 0,\;\;\;
\|x_2-x^\star\|\le \gamma\|x_1-x^\star\|^{2},\;\;\;
x_2\in B_{\de}(\bx).
\end{gather*}	
By this procedure, one can find a Newton sequence $\{x_k\}_{k\in\N}$ in $B_\de(\bx)$ satisfying $\|x_{k+1}-x^\star\|\le\gamma\|x_k-x^\star\|^{2}$ for all $k\in\N$.
Let $\theta:=\gamma\|u-x^\star\|$.
Then,
$\theta\le\gamma(\|u-\bx\|+\|x^\star-\bx\|)
\le\dfrac{3}{2}\de\gamma
\le \dfrac{3}{2}\cdot\dfrac{2}{9}
<1.$
One has $\|x_{k}-x^\star\|\le\theta^{2^{k}-1}\|u-x^\star\|$ for all  $k\in\N$,
and consequently,  $\{x_k\}_{k\in\N}$ converges quadratically  to $x^\star$.
The proof is complete.
\qed\end{proof}	

\begin{remark}
In the particular case  $x^\star=\bx$, Theorem~\ref{T6.2} recaptures \cite[Theorem~15.1]{Don21}.
\end{remark}	

\section{Conclusions}\label{S7}
Primal and dual necessary and sufficient conditions for H\"older metric regularity have been established.
The H\"older version of the extended Lyusternik-Graves theorem providing an affirmative answer to the open question posed by Donchev \cite{Don15} has been proved.
The results have been applied to convergence analysis of a Newton-type method.
The following problems are going to be studied in the future research.

\begin{enumerate}
\item 
Establishing radius results for regularity properties is, of course, an important topic of variational analysis.
However, up to now,  most of the results have been for the linear setting, and there are very few publications studying perturbations of metric regularity properties in the H\"older framework.
Among them, let us mention the papers by
He and Ng \cite{HeNg18} for metric regularity; Mordukhovich and Ouyang \cite{MorOuy15}, Ouyang and Li \cite{OuyLi21}, and Cibulka et al. \cite{CibDonKru18} for strong subregularity.
Hence, it would be good to have a (hopefully) complete picture of stability results under various types of single-valued and set-valued  perturbations in the H\"older setting for properties such as semiregularity \cite{CibFabKru19}; subregularity, strong subregularity, regularity, strong regularity \cite{DonRoc14}; pseudo-regularity \cite{Gfr14}; and others.	
\item 
Formulating parameterized versions of Theorem~\ref{T5.1} for other regularity properties.
In the case of metric regularity, we expect to recapture \cite[Theorem~5E.5]{DonRoc14} when $q=1$.	
\item 
Is the estimate \eqref{T5.2-4}  sharp? In other words, if ${\rm{rg}}^qF(\bx,\by)<+\infty$ and
$\mu\in [0,{\rm{rg}}^qF(\bx,\by)]$, does there exist  a single-valued mapping $f:X\rightarrow Y$ with $f(\bx)=0$, 
\begin{gather*}
{\rm{lip}}^{q}f(\bx)=\mu,
\;\;
\text{and}
\;\;
{\rm{rg}}^q(F+f)(\bx,\by)=	
{\rm{rg}}^qF(\bx,\by)-\mu^q.
\end{gather*}	
In the case $q=1$, the aforementioned problem can be traced back to the open question raised by Ioffe \cite{Iof03} to which Gfrerer and Kruger \cite{GfrKru23} have recently provided an affimative answer in the Asplund setting.
\item
It is well known that metric regularity properties of set-valued mappings have strong connections with transversality properties of collections of sets \cite{CuoKru20,CuoKru21.2,KruTha16}.
While radius results for regularity properties have been investigated, there are no available results for transversality properties.
It would be good to establish radius theorems for models involving collections of sets.
\end{enumerate}	


\noindent{\bf Acknowledgement.}
Nguyen Duy Cuong has been supported by the Postdoctoral Scholarship Programme of Vingroup Innovation Foundation (VinIF) code VINIF.2022.STS.40.
The author wishes to thank Alexander Kruger for comments and suggestions.

\noindent{\bf Conflict of interest.} The author has no competing interests to declare that are relevant to the content of this article.

\noindent{\bf Data availability. }
Data sharing is not applicable to this article as no datasets have been generated or analysed during the current study.

\addcontentsline{toc}{section}{References}
\bibliography{BUCH-kr,Kruger,KR-tmp,Cuong++}
\bibliographystyle{spmpsci}
\end{document}